\newcommand\blfootnote[1]{%
  \begingroup
  \renewcommand\thefootnote{}\footnote{#1}%
  \addtocounter{footnote}{-1}%
  \endgroup
}
\long\def\/*#1*/{}
\newsavebox\myboxA
\newsavebox\myboxB
\newlength\mylenA
\newcommand*\xoverline[2][0.75]{%
    \sbox{\myboxA}{$\m@th#2$}%
    \setbox\myboxB\null
    \ht\myboxB=\ht\myboxA%
    \dp\myboxB=\dp\myboxA%
    \wd\myboxB=#1\wd\myboxA
    \sbox\myboxB{$\m@th\overline{\copy\myboxB}$}
    \setlength\mylenA{\the\wd\myboxA}
    \addtolength\mylenA{-\the\wd\myboxB}%
    \ifdim\wd\myboxB<\wd\myboxA%
       \rlap{\hskip 0.5\mylenA\usebox\myboxB}{\usebox\myboxA}%
    \else
        \hskip -0.5\mylenA\rlap{\usebox\myboxA}{\hskip 0.5\mylenA\usebox\myboxB}%
    \fi}
\numberwithin{equation}{section}
\def\sss{\scriptscriptstyle}
\newcommand{\floor}[1]{\ensuremath{\left\lfloor #1 \right\rfloor}}
\newcommand{\ind}[1]{\ensuremath{\mathbbm{1}_{\left\{#1\right\}}}}
\newcommand{\pto}{\ensuremath{\xrightarrow{\mathbbm{P}}}}
\newcommand{\PR}{\ensuremath{\mathbbm{P}}}
\newcommand{\E}{\ensuremath{\mathbbm{E}}}
\newcommand{\e}{\ensuremath{\mathrm{e}}}
\newcommand{\OP}{\ensuremath{O_{\sss\PR}}}
\newcommand{\oP}{\ensuremath{o_{\sss\PR}}}
\newcommand{\thetaP}{\ensuremath{\Theta_{\sss\PR}}}
\newcommand{\dif}{\mathrm{d}}
\newcommand{\bld}[1]{\boldsymbol{#1}}
\newcommand{\NR}{\mathrm{MNR}_n(\bld{w})}
\newcommand{\sNR}{\mathrm{SNR}_n(\bld{w})}
\newcommand{\mNR}{\mathrm{MNR}_n(\bld{w})}
\newcommand{\rNR}{\mathrm{MNR}_n}
\newcommand{\rsNR}{\mathrm{SNR}_n}
\newcommand{\bw}{\bld{w}}
\newcommand{\sC}{\mathscr{C}}
\newcommand{\sCi}{\mathscr{C}_{\sss (i)}}
\newcommand{\tE}{\tilde{\mathbbm{E}}}
\newcommand{\tPR}{\tilde{\mathbbm{P}}}
\newcommand{\sF}{\mathscr{F}}
\newcommand{\cf}{c_{\sss \mathrm{F}}}
\newcommand{\barcf}{\bar{c}_{\sss \mathrm{F}}}
\newcommand{\sB}{\mathscr{B}}
\newcommand{\cS}{\mathcal{S}}
\newtheorem{theorem}{Theorem}
\newtheorem{algo}[theorem]{Algorithm}
\newtheorem{lemma}[theorem]{Lemma}
\newtheorem{proposition}[theorem]{Proposition}
\newtheorem{assumption}[theorem]{Assumption}
\newtheorem{remark}[theorem]{Remark}
\newtheorem{fact}[theorem]{Fact}
\numberwithin{theorem}{section}
\def\qed{ \hfill $\blacksquare$}
\newcommand{\cF}{\mathscr{F}}
\newcommand{\cG}{\mathcal{G}}
\newcommand{\cN}{\mathcal{N}}
\newcommand{\cX}{\mathcal{X}}
\newcommand{\Var}{\mathrm{Var}}
\newcommand{\Cov}{\mathrm{Cov}}
\newcommand{\cT}{\mathcal{T}}
\newcommand{\sCa}{\mathscr{C}^a}
\newcommand{\eqn}[1]{\begin{equation} #1 \end{equation}}
\newcommand{\eqan}[1]{\begin{align} #1 \end{align}}
\newcommand{\vep}{\varepsilon}
\newcommand{\Prob}{{\mathbb P}}
\newcommand{\op}{o_{\sss \Prob}}
\newcommand{\nn}{\nonumber}
\newcommand{\Poi}{{\sf Poisson}}
\newcommand{\seta}{\eta_s}
\newcommand{\per}{\pi}
\newcommand{\percn}{\pi_n}
\newcommand{\ww}{\bar{w}}
\newcommand{\bell}{\bar{\ell}}
\begin{document}
\title{Barely supercritical percolation on \\
Poissonian scale-free networks}
\author{Souvik Dhara$^1$ and Remco van der Hofstad$^2$}

\maketitle
\begin{abstract}
We study the giant component problem slightly above the critical regime for percolation on Poissonian random graphs in the scale-free regime, where the vertex weights and degrees have a diverging second moment. 
Critical percolation on scale-free random graphs have  been observed to have incredibly subtle features that are markedly different compared to those in random graphs with converging second moment. In particular, the critical window for percolation depends sensitively on whether we consider {\em single-} or {\em multi-edge} versions of the Poissonian random graph.

In this paper, and together with our companion paper~\cite{BDH18}, we build a bridge between these two cases. Our results characterize the part of the barely supercritical regime where the size of the giant components are approximately same for the single- and multi-edge settings.  
The methods for establishing concentration of giant for the single- and multi-edge versions are quite different. While the analysis in the multi-edge case is based on scaling limits of exploration processes, the  single-edge setting requires identification of a core structure inside certain  high-degree vertices that forms the giant component.
\end{abstract}
\blfootnote{\emph{Emails:} 
 \href{mailto:sdhara@mit.edu}{sdhara@mit.edu},
 \href{mailto:r.w.v.d.hofstad@tue.nl}{r.w.v.d.hofstad@tue.nl}} 
\blfootnote{$^1$Department of Mathematics, Massachusetts Institute of Technology}
\blfootnote{$^2$Department of Mathematics and Computer Science, Eindhoven University of Technology}
\blfootnote{2010 \emph{Mathematics Subject Classification.} Primary: 60C05, 05C80.}
\blfootnote{\emph{Keywords and phrases}.  percolation, giant component, scale-free,  inhomogeneous random graphs}

\noindent 
\section{Introduction}

\subsection{Background and motivation}
Given a (random) graph, {\em percolation} refers to
deleting each edge with some probability $1-\pi$,  independently. 
Percolation gives an elementary model for the structural transition in the connectivity of  large-network architectures as the value of $\pi$ increases. 
This structural connectivity phase transition is widely recognized as a central problem, arising e.g.\ in the study of epidemics on real-world networks,  or the robustness of networks such as the Internet when the edges of the underlying network experience random breakdown~\cite{Bar16,Newman-book}. 
Empirical studies on a wide variety of real-world systems such as the World-Wide Web, social networks suggest that scale-free networks, with degree distribution having diverging second-moment, are ubiquitous (see \cite{Bar16} and the references therein). 
As a result, percolation on scale-free  networks has attracted enormous attention, both in the rigorous as well as in the applied literature. 
Mathematically, percolation processes on these random graphs display quite different behavior, since scale-free networks have a highly {\em inhomogeneous} degree distribution, and the vertices with extremal degrees play a special role in the formation of large components.

Given a graph sequence $(G_n)_{n\geq 1}$ with $G_n$ having $n$ vertices consider performing percolation with probability $\pi_n$, where $\pi_n$ may depend on the graph size. 
We say that percolation is \emph{supercritical} if there is a \emph{giant} component in the sense that, with high probability, there is a component whose size is much larger than that of all other components.  

One of the well-known features of scale-free networks is that these networks are \emph{robust} under random edge-deletion, i.e., for any sequence $(\percn)_{n\geq 1}$ with $\liminf_{n\to\infty} \percn > 0$, the graph obtained by applying percolation with probability $\percn$ remains supercritical, and the giant component even contains a positive proportion of the vertices of the graph. 
This feature has been studied experimentally in~\cite{AJB00}, using heuristic arguments in~\cite{CEbAH00,CNSW00,DGM08,CbAH02}, and mathematically in~\cite{BR03}. 
Thus, in order to pinpoint for what values of $\pi_n$ the giant component appears, one needs to take $\pi_n \to 0$ the network size $n\to\infty$. This identifies the central question in this field:
\begin{quote}
    What is the right scaling of $\pi_n$ with $n$ in order to see a giant component?
\end{quote}
For scale-free networks, this question has recently been studied for the multi-edge version of the configuration model \cite{DHL19} and inhomogeneous random graphs where multi-edges are not allowed \cite{BDH18}. 

The main motivation of this paper stems from  observations in \cite{BDH18,DHL19}. 
Suppose that the weight distribution for generating the  random graph satisfies a power-law distribution with exponent $\tau \in (2,3)$ (see Assumption~\ref{assumption-NR} below for the precise conditions). 
It was shown in \cite{BDH18} that there exists an explicit constant $\lambda_c\in (0,\infty)$ satisfying the following: 
\begin{enumerate}[(1)]
    \item When the percolation parameter satisfies $\percn = \lambda n^{-(3-\tau)/2}$ and $\lambda \in (0, \lambda_c)$, the critical behavior is observed with largest components having size of order $n^{\beta}$ with $\beta=(\tau^2-4\tau+5)/[2(\tau-1)]\in[\sqrt{2}-1, \tfrac{1}{2})$, and the re-scaled vector of ordered component sizes have a non-degenerate scaling limit.
    \item When $\percn = \lambda n^{-(3-\tau)/2}$ and $\lambda \in (\lambda_c,\infty)$ is fixed, then a unique tiny giant component $\sC_{\sss (1)}^*$ emerges, satisfying $n^{-1/2} |\sC_{\sss (1)}^*| \xrightarrow{\sss \PR}   \zeta_1^{\lambda}$ for some $\zeta_1^{\lambda}>0$.
\end{enumerate}
On the other hand, for the configuration model, it was shown in \cite{DHL19} that a giant component emerges for much smaller $\pi_n$ values, more precisely, for $\pi_n \gg n^{-(3-\tau)/(\tau-1)}$. In particular, if $\sC_{\sss (1)}$ denotes the giant component for the configuration model, then for $\percn = \lambda n^{-(3-\tau)/2}$ and $\lambda \in (0,\infty)$, we have $n^{-1/2} |\sC_{\sss (1)}| \xrightarrow{\sss \PR}  \zeta_2^{\lambda}$. 
The description of $\zeta_1^{\lambda}$ is markedly different from $\zeta_2^{\lambda}$. While $\zeta_2^{\lambda}$ can be calculated explicitly in terms of the degree distribution, $\zeta_1^{\lambda}$ is characterized in terms of an implicit limit of  certain integrals of  survival probability functions of multi-type branching processes.
Since the later branching process has a non-rank-one kernel, one  
expects the implicit constant $\zeta_1^{\lambda}$ to be different from  $\zeta_2^{\lambda}$ even when we set the degrees and the weights to be equal. 
However, when $\liminf_{n\to\infty} \percn >0$, one would expect that that the asymptotic sizes of the giant components should not be dictated by such single-multi-edge considerations, since the local-weak limits are the same (in fact one can calculate the size of the giant component using the framework of general inhomogeneous random graphs of Bollob{\'a}s, Janson, and Riordan~\cite{BJR07}).
This raises the question: 
\begin{quote}
    When does the size of the giant become  identical for the models with single and multi-edge settings? 
\end{quote}
In this paper, we investigate this question further. To have a common model for the single- and multi-edge settings, we consider the Poissonian random graph or Norros-Reittu model~\cite{NR06} (see Section~\ref{sec-RG-mod}). 
Our main results analyze the size of the giant component for the barely supercritical phase for percolation on both the single and multi-edge settings. The analysis shows that, if $\percn = \lambda_n n^{-(3-\tau)/2}$ and $\lambda_n\to\infty$ (and also $\percn \to 0$), then there is no asymptotic difference between the sizes of the giant components in the single- and multi-edge settings. 

\subsection{Notation} 
\label{sec:notation}
To describe the main results of this paper, we need some definitions and notations. 
We use the standard notation of $\xrightarrow{\sss\PR}$ and $\xrightarrow{\sss d}$ to denote convergence in probability and in distribution, respectively.
We often use the Bachmann-Landau notation $O(\cdot)$, $o(\cdot)$, $\Theta(\cdot)$ for large $n$ asymptotics of real numbers.
We write $a_n \asymp b_n$ as a shorthand notation for $a_n/b_n = (1+o(1))$.
 A sequence of events $(\mathcal{E}_n)_{n\geq 1}$ is said to occur with high probability~(whp) with respect to the probability measures $(\mathbbm{P}_n)_{n\geq 1}$  when $\mathbbm{P}_n\big( \mathcal{E}_n \big) \to 1$. 
For (random) variables $X_n$ and $Y_n$, define $X_n = O_{\sss\mathbbm{P}}(Y_n)$ when  $ ( |X_n|/|Y_n| )_{n \geq 1} $ is a tight sequence; $X_n =o_{\sss\mathbbm{P}}(Y_n)$ when $X_n/Y_n  \xrightarrow{\sss\PR} 0 $; $X_n =\thetaP(Y_n)$ if both $X_n=\OP(Y_n) $ and $Y_n=\OP(X_n)$. 
We also use $C,C',C_1,C_2$ etc.\ as generic notations for positive constants whose values can be different in different equations.

Fix $\tau \in (2,3)$.  Throughout this paper, we write  
	\begin{equation}\label{eqn:notation-const}
 	\alpha= 1/(\tau-1),\qquad  \eta=(3-\tau)/(\tau-1), \qquad \seta = (3-\tau)/2.
	\end{equation}

\subsection{The Poissonian random graph}
\label{sec-RG-mod}
Let $\bw=(w_i)_{i\in [n]}$ be a given set of weights on the vertex set $[n]:= \{1,\dots, n\}$, and let $\ell_n = \sum_{i\in [n]}w_i$ denote the total weight.
The Poissonian random graph or Norros-Reittu model \cite{NR06} is a multi-graph generated by creating  $X_{ij}$ many edges between vertices $i$ and $j$ independently, where 
    \begin{eq}
    X_{ij} \sim \mathrm{Poisson} (w_iw_j/\ell_n).
    \end{eq}
We denote this random multi-graph by $\NR$. 
The naturally associated simple graph, denoted by $\sNR$, is obtained by creating an edge between  $i$ and $j$ precisely when $X_{ij} \geq 1$. Thus an edge is kept between  vertex $i$ and $j$ independently with probability
	\begin{equation}
	\label{eq:p-ij-NR-defn}
	p_{ij} =1-\e^{-w_iw_j/\ell_n}.
	\end{equation}
The percolated graph $\rNR(\bw,\per)$ is obtained by keeping each edge of the multi-graph independently with probability $\per$. This deletion process is also independent of the randomization of the graph. If the underlying graph for percolation is the single-edge graph $\sNR$ instead, we denote the percolated graph by $\rsNR(\bw,\per)$.

In order to ensure that we are in the scale-free regime, we work with the following choice of vertex weights:
	\begin{assumption}[Scale-free weight structure]
	\label{assumption-NR}
	\normalfont
	For some $\tau \in (2,3)$, consider the distribution function~$F$ satisfying $[1-F](w) = Cw^{-(\tau-1)}$ for some $C>0$, and let $w_i = [1-F]^{-1}(i/n)$. 
	\end{assumption} 
	
In the above case, if $W_n$ denotes the weight of a vertex chosen uniformly at random, then $W_n$ will satisfy an asymptotic power-law in the sense that $\PR(W_n>w)=Cw^{-(\tau-1)},$ and, as a result, so will the degree distribution of the graph (see \cite[Chapter 6]{Hof17}), so that indeed we are dealing with a scale-free random graph. Further,
		\begin{equation}\label{ell-n}
		\E[W_n] = \frac{1}{n}	\sum_{i\in [n]}w_i \to \mu = \E[W],
		\end{equation}
and, for all $i\in[n]$,
	\begin{eq}\label{eq:weight-assumption}
w_i = [1-F]^{-1} \Big(\frac{i}{n}\Big) = \cf \Big(\frac{n}{i} \Big)^\alpha.
\end{eq}
where $\cf=C^{-1/(\tau-1)}>0$ with $C$ from Assumption \ref{assumption-NR}, and we recall that $\alpha = 1/(\tau-1)$ as in~\eqref{eqn:notation-const}. Throughout $\cf$ will denote the special constant appearing in \eqref{eq:weight-assumption}.

\subsection{Main results}
The percolation phase transition on $\NR$ model lies in the universality class of the configuration model studied in \cite{DHL19}. The critical window in this case consists of $\percn = \lambda n^{-\eta}$ for fixed $\lambda>0$ and $\eta$ as in \eqref{eqn:notation-const}. 
Thus, the supercritical behavior is observed for percolation probability is given by 
\begin{eq}\label{perc-prob}
\percn = \lambda_n n^{-\eta} = \lambda_n n^{-(3-\tau)/(\tau-1)} , \quad \text{where }\lambda_n \to\infty \text{ and } \lambda_n = o(n^{(3-\tau)/(\tau-1)}).
\end{eq}
The condition that $\lambda_n = o(n^{(3-\tau)/(\tau-1)})$ corresponds to $\percn=o(1)$, which we may assume without loss of generality, since there is a giant component whenever $\liminf_{n\rightarrow \infty}\percn>0$.
Let $(|\sCi(\per)|)_{i\geq 1}$ be the component sizes of $\rNR(\bw,\per)$, arranged in non-increasing order (breaking ties arbitrarily). 
The following theorem describes our main result for the multi-edge setting:
\begin{theorem}[Barely supercritical multi-edge setting]
\label{thm:asymp-multi}
Under Assumption~\ref{assumption-NR} and \eqref{perc-prob}, as $n\to\infty$, 
\begin{eq}
\frac{|\sC_{\sss (1)}(\percn)|}{n\percn^{1/(3-\tau)} } \pto \mu\kappa^{1/(3-\tau)}, 
\end{eq} 
where $\kappa = \cf^{\tau-2} \Gamma(3-\tau)$, and $\Gamma(\cdot)$ denotes the gamma function. Further, for any $j\geq 2$,   $|\sC_{\sss (j)}(\percn)| = \oP(n\percn^{1/(3-\tau)})$. 
\end{theorem}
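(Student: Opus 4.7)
The plan is to exploit the Poisson-thinning property of the Norros--Reittu construction. Since $X_{ij}\sim\mathrm{Poisson}(w_iw_j/\ell_n)$ and $\pi_n$-percolation thins each edge independently, $\rNR(\bw,\pi_n)$ has the same distribution as $\rNR(\pi_n\bw)$, a Norros--Reittu graph with rescaled weights $w'_i=\pi_n w_i$ (since the corresponding rate becomes $w'_iw'_j/\ell'_n=\pi_n w_iw_j/\ell_n$). The problem thus reduces to a giant-component analysis for $\rNR(\pi_n\bw)$ in a regime where the effective branching-process mean $\pi_n\sum_iw_i^2/\ell_n=\Theta(\lambda_n)$ diverges, placing us firmly above criticality. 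Heuristically, the survival probability $q(w)=1-\e^{-\pi_nw\zeta}$ for the size-biased branching-process approximation satisfies the fixed-point equation $\mu\zeta=\E[W(1-\e^{-\pi_nW\zeta})]$. Substituting $u=\pi_nw\zeta$ in the integral and using the identity $\int_0^\infty u^{1-\tau}(1-\e^{-u})\,\dif u=\Gamma(3-\tau)/(\tau-2)$ together with $\mu=C(\tau-1)\cf^{2-\tau}/(\tau-2)$, one extracts $\zeta=\Theta(\pi_n^{(\tau-2)/(3-\tau)})$, and then $\E[q(W)]\asymp \mu\kappa^{1/(3-\tau)}\pi_n^{1/(3-\tau)}$ with $\kappa=\cf^{\tau-2}\Gamma(3-\tau)$, matching the claimed limit.

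To rigorously upgrade the heuristic, I would run a size-biased breadth-first exploration of $\rNR(\pi_n\bw)$. Letting $Z_l$ denote the total weight of the frontier (discovered but unexplored vertices) after $l$ exploration steps, the increment $Z_{l+1}-Z_l$ is, conditionally on the past, a centered Poisson-type sum with drift $\pi_nZ_l$ minus the weight of the just-explored vertex. Under the scaling $l=tn\pi_n^{1/(3-\tau)}$ and $Z_l$ normalized by the matching power, the process converges in probability to the solution of a deterministic ODE whose first passage time to zero equals $\mu\kappa^{1/(3-\tau)}$. Martingale concentration controls the Poisson fluctuations, so the size of the largest cluster discovered is read off from this passage time. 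A standard sprinkling argument---reserving a vanishing fraction of the percolation weight and adding it back at the end to merge would-be competitors into the discovered giant---then gives $|\sC_{\sss(2)}(\pi_n)|=\oP(n\pi_n^{1/(3-\tau)})$, yielding uniqueness.

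The principal obstacle is the divergent second moment of $W$ for $\tau\in(2,3)$: this invalidates direct application of the Bollob\'as--Janson--Riordan framework and causes the exploration to be dominated in its early phase by a few hub vertices with $\pi_nw_i\gg 1$, which contribute outsize stochastic fluctuations to $Z_l$. I would address this by a two-scale decomposition: truncate the weight vector at a slowly-growing threshold $K_n$, use the pairwise edge probability $1-\e^{-\pi_nw_iw_j/\ell_n}$ together with a union bound to show that all above-threshold vertices lie in a single component whp, and apply the scaling-limit analysis of the exploration process to the truncated-weight graph, for which the effective second moment is controlled. Combining the hub-absorption bound with the scaling-limit result for the truncated exploration then yields the stated concentration and completes the theorem.
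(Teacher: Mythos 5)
Your first reduction (Poisson thinning, so $\rNR(\bw,\pi_n)\stackrel{d}{=}\rNR(\pi_n\bw)$) and your heuristic fixed-point computation match the paper and lead to the correct constant $\mu\kappa^{1/(3-\tau)}$. However, the technical machinery you propose to make the heuristic rigorous differs from the paper's in ways that matter, and at least one ingredient as stated does not work.

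The paper does \emph{not} track the total frontier weight with drift $\pi_n Z_l$. That drift formula is wrong for $\tau\in(2,3)$: when you explore a vertex $v$, the expected new frontier weight involves $\pi_n w_v \sum_{j\text{ undiscovered}} w_j^2/\ell_n$, and the undepleted version of this sum is $+\infty$ because $\E[W^2]=\infty$. The drift is neither finite a priori nor expressible as $\pi_n Z_l$; it depends on which vertices have already been depleted, not just on the frontier weight. The paper sidesteps this entirely by using the mark-based exploration of [BHL12]: at each step an independent mark is drawn from the size-biased distribution \eqref{eq:mark-distn}, so the discovered set $V_l$ satisfies the exact closed form $\PR(i\in V_l)=1-(1-w_i/\ell_n)^l$. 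The key walk $S_n(l)=\sum_i \bar w_i\mathbf{1}\{i\in V_l\}-l$ then admits the explicit Laplace-type computation in Lemma~\ref{lem:laplace}, and the variance bound (via negative correlation of the indicators) shows that the hub contribution $w_1\pi_n/\beta_n\to 0$ automatically. In particular, the truncation/hub-absorption step you propose is unnecessary in the multi-edge case; in the paper it plays a role only in the single-edge proof of Theorem~\ref{thm:asymp-single}, where one really does identify a core of hubs.

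For uniqueness of the giant, the paper also does not sprinkle. Instead it shows (Lemma~\ref{lem:nu-n}) that after time $t\beta_n$ for $t$ large, the post-depletion branching ratio $\nu_n(t)$ drops below $1/2$ whp, and then (Proposition~\ref{prop:large-early}) dominates remaining components by subcritical branching processes, bounding $\sum_i|\sC^t_{(i)}|^2$. A sprinkling argument might be made to work, but you would need to say how to reserve and re-add percolation weight in a regime where $\pi_n\to 0$ at the exact rate being analysed, and you would still need an a priori bound that the pre-sprinkling second component is not macroscopic; these details are nontrivial here. So the gap is concrete: as written, your exploration process has an ill-defined drift, and the uniqueness step is only a sketch. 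Either adopt the mark-based exploration with its explicit depletion, or make the truncation argument precise and show that the truncation error is $o(\beta_n)$.
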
	
\begin{remark}[Critical scaling for $\rNR(\bw,\percn)$] \normalfont 
Since $\rNR(\bw)$ lies in the same universality class as the configuration model, one would expect that the components of $\rNR(\bw,\percn)$ exhibit critical behavior with identical scaling limits and scaling exponents as in \cite{DHL19} when $\lambda$ is fixed.
We do not study the critical behavior of percolation on  $\rNR(\bw)$ in this paper. 
\end{remark}

\begin{remark}[Comparison to other universality classes.] \normalfont It is worthwhile to compare the above barely super-critical behavior with that  for  other universality classes. The two well-known universality classes in the literature appear in the context of $\tau\in (3,4)$ and $\tau >4$, and the barely super-critical behavior in these universality classes was studied in \cite{HJL16,JL09}.  
It turns out that, for $\tau >4$ and $\tau\in (3,4)$, the size of the newly born giant grows as $n \percn$ and $n \percn^{1/(\tau-3)}$ respectively. On the other hand, Theorem~\ref{thm:asymp-multi} proves that the size of the newly born giant scales as $n\percn^{1/(3-\tau)}$ for $\tau \in (2,3)$. 
\end{remark}
The next theorem compares the giant components of $\rNR(\bw,\percn)$ and $\rsNR(\bw,\percn)$ for $\pi_n = \lambda_n n^{-\seta}$ and $\lambda_n\to\infty$. 
Let $(|\sCi^*(\per)|)_{i\geq 1}$ be the component sizes of $\rsNR(\bw,\per)$, arranged in non-increasing order (breaking ties arbitrarily).

\begin{theorem}[Barely supercritical single-edge setting]
\label{thm:asymp-single}
Under Assumption~\ref{assumption-NR}, for $\percn = \lambda_n n^{-\seta}$ with $\lambda_n \to \infty$ and $\lambda_n = o(n^{\seta})$,
\begin{eq}
\label{super-critical-equivalent}
\big||\sC_{\sss (1)}(\percn)|-|\sC^*_{\sss (1)}(\percn)|\big| = \oP(n\percn^{1/(3-\tau)}).
\end{eq}
Further, for any $j\geq 2$,   $|\sC_{\sss (j)}^*(\percn)| = \oP(n\percn^{1/(3-\tau)})$.
\end{theorem}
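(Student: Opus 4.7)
The plan is to reduce~\eqref{super-critical-equivalent} to a coupling argument, using Theorem~\ref{thm:asymp-multi} as the reference for the multi-edge giant. Once~\eqref{super-critical-equivalent} is established, the limit $|\sC^*_{(1)}(\percn)|/(n\percn^{1/(3-\tau)})\pto \mu\kappa^{1/(3-\tau)}$ follows immediately from Theorem~\ref{thm:asymp-multi}, so the work is concentrated in proving that the two giant components are asymptotically equal in size.

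I would first couple $\rNR(\bw,\percn)$ and $\rsNR(\bw,\percn)$ on a single probability space: generate $X_{ij}\sim\Poi(w_iw_j/\ell_n)$ and an independent Bernoulli$(\percn)$ variable $B_{ij}$ for each pair, place an edge in $\rsNR(\bw,\percn)$ iff $X_{ij}\geq 1$ and $B_{ij}=1$, and build $\rNR(\bw,\percn)$ by thinning each of the $X_{ij}$ copies independently with probability $1-\percn$, reusing $B_{ij}$ for one distinguished copy. Under this coupling, $\rsNR(\bw,\percn)\subseteq \rNR(\bw,\percn)$ as edge sets and the two graphs differ only on pairs with $X_{ij}\geq 2$. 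Next, I would introduce a threshold $w_n^\star$ of order $\sqrt{n}\,\omega_n$ with $\omega_n\to\infty$ and $\omega_n^{\tau-1}=o(\lambda_n)$, and define the hub $V^\star:=\{i:w_i\geq w_n^\star\}$. Then $|V^\star|=\thetaP(n^{(3-\tau)/2}\omega_n^{-(\tau-1)})=o(n\percn^{1/(3-\tau)})$, while the intra-$V^\star$ edge probability in $\rsNR(\bw,\percn)$ is $\percn(1-\oP(1))$, making the induced subgraph on $V^\star$ a supercritical \erdos-type graph with mean degree $\thetaP(\lambda_n/\omega_n^{\tau-1})\to\infty$. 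Its giant component $\mathscr{H}$ therefore contains $|V^\star|(1-\oP(1))$ vertices; by the coupling, the same statement holds in $\rNR(\bw,\percn)$, and the two hubs differ on at most $\oP(|V^\star|)=\oP(n\percn^{1/(3-\tau)})$ vertices.

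To quantify the attachment of vertices outside $V^\star$ to $\mathscr{H}$, I would perform a BFS exploration from each $i\notin V^\star$ that terminates upon reaching $V^\star$. On pairs revealed during this exploration that lie entirely in $V\setminus V^\star$, the Taylor expansion
\[
\bigl|(1-\e^{-\percn w_iw_j/\ell_n})-\percn(1-\e^{-w_iw_j/\ell_n})\bigr|=O(\percn(w_iw_j/\ell_n)^2)
\]
gives a uniformly negligible discrepancy between the two models' edge probabilities, and for pairs that cross into $V^\star$ the same bound holds once one notes that reaching $V^\star$ is controlled by the \emph{expected} number of direct edges, identical in both models up to the same Taylor error. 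These two ingredients allow a coupling of the two BFS processes with a common branching-process envelope, showing that the event ``$i$ connects to $\mathscr{H}$'' agrees across the two models for all but $\oP(n\percn^{1/(3-\tau)})$ vertices. Summing over $i$ yields~\eqref{super-critical-equivalent}. The uniqueness claim for $j\geq 2$ follows from a refinement of the same exploration argument: any component disjoint from $\mathscr{H}$ must avoid $V^\star$ entirely, and a sharp upper bound on the hitting probability of $V^\star$ (via a size-biased weight analysis) implies that such components contain $\oP(n\percn^{1/(3-\tau)})$ vertices.

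The main obstacle, in my view, is the hub step: the intra-$V^\star$ edge probabilities in the two models differ by a factor of up to $\percn^{-1}$ on the densest pairs, so naive stochastic domination does not directly transfer the identity of $\mathscr{H}$ from one model to the other. The resolution is that both models still deposit enough intra-$V^\star$ edges to merge almost every hub vertex into a single connected subgraph; the internal wiring of $\mathscr{H}$ differs between the models but its \emph{vertex set} is robust, and this robustness is precisely what the coupled BFS attachment argument needs in order to conclude that the two giant components agree to within $\oP(n\percn^{1/(3-\tau)})$.
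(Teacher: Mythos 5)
Your overall strategy differs genuinely from the paper's, and there is a gap at the crucial step. You take $V^\star=\{i:w_i\geq \sqrt{n}\,\omega_n\}$, of size $\asymp n^{(3-\tau)/2}\omega_n^{-(\tau-1)}$, and then try to attach the rest of the graph to the dense Erd\H{o}s--R\'enyi-like hub by a BFS argument in which the two models are coupled using a Taylor bound on the edge probabilities. The paper instead works with a much larger core $[N_n(a)]$ with $N_n=\lambda_n^{(\tau-1)/(3-\tau)}n^{(3-\tau)/2}$: at that scale $w_iw_j/\ell_n$ is \emph{bounded}, so the restricted graph is a rank-1 inhomogeneous random graph to which \cite{BJR07} applies directly, and the 1-step neighborhood of its giant already has size $(\zeta_a-\varepsilon)\beta_n$ with $\zeta_a\to\zeta$. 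No BFS into the bulk and no coupling of the two exploration processes is needed. In particular, your $V^\star$ is far smaller than $[N_n(a)]$: the 1-neighborhood of $\mathscr{H}$ has size of order $\percn W_{V^\star}\asymp\lambda_n^{(2-\tau)/(3-\tau)}\omega_n^{-(\tau-2)}\beta_n=o(\beta_n)$, so the vertices in the range $w_i\in\big[\sqrt{n}\,\lambda_n^{-1/(3-\tau)},\sqrt{n}\,\omega_n\big)$ — precisely the paper's core $[N_n(a)]\setminus V^\star$ — have to be recovered by the multi-step exploration, and this is where the argument breaks.

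The gap is in the ``crossing pairs'' claim. For $k\notin V^\star$ and $j\in V^\star$, the expected number of retained edges is $\percn w_kw_j/\ell_n$ in the multi-edge model but $\percn(1-\e^{-w_kw_j/\ell_n})$ in the single-edge model. These agree up to a Taylor error only when $w_kw_j/\ell_n$ is small. But for $k$ just outside $V^\star$ (so $w_k$ slightly below $\sqrt n\,\omega_n$) and $j$ a top hub ($w_j\asymp n^\alpha$), one has $w_kw_j/\ell_n\gg 1$, so the single-edge expectation saturates at $\percn$ while the multi-edge one does not. These are exactly the vertices that carry the bulk of the neighbor mass, so the discrepancy is not ``controlled by the same Taylor error'' as you assert. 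The same issue arises already for pairs $i,j\in V\setminus V^\star$ with both weights near $\sqrt n\,\omega_n$: there $w_iw_j/\ell_n\asymp\omega_n^2\to\infty$, so the claimed Taylor bound $O(\percn(w_iw_j/\ell_n)^2)$ is \emph{larger} than the leading term $\percn w_iw_j/\ell_n$, not uniformly negligible. You identify the intra-hub discrepancy as the main obstacle, but the real one is these boundary and crossing pairs, where the two models' edge probabilities differ by unbounded multiplicative factors, not additive Taylor errors. As written, ``summing over $i$'' also glosses over the dependence between the attachment events, and the $j\geq2$ claim is not argued. The paper's core-and-one-neighborhood route avoids all of this by choosing the scale at which $w_iw_j/\ell_n$ is $O(1)$ uniformly, never comparing the two BFS processes in the bulk, and getting the constant $\zeta$ out of the BJR survival-probability machinery rather than by transferring it from the multi-edge theorem.
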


\paragraph*{Proof ideas.} 
We prove Theorem \ref{thm:asymp-multi} using an {\em exploration process} on $\rNR(\bw,\per)$. Here, it is very helpful that $\rNR(\bw,\per)$ has the same law as $\rNR(\bw\per)$. 
This allows us to use an exploration process suggested in \cite{BHL12}, which relies on creating Poisson$(\percn w_i/\ell_n)$ many potential neighbors for each visited vertex $i$ and deciding the status of the potential vertices sequentially by sampling marks from a size-biased distribution. This construction significantly simplifies the analysis of the exploration process (see Section~\ref{sec:exploration} for details).
Since the exploration process has a {\em only one} large excursion, there also is a unique largest connected component. Further, since the size of the largest excursion {\em concentrates}, also the largest connected component does so.

Theorem \ref{thm:asymp-single} is proved by using that an obvious coupling exists of $\rNR(\bw,\per)$ and $\rsNR(\bw,\per)$ that satisfies $|\sC_{\sss (1)}(\percn)|\geq |\sC^*_{\sss (1)}(\percn)|$. Thus, it suffices to show that 
    \[
    |\sC_{\sss (1)}^*(\percn)|\geq (1+\op(1))n\percn^{1/(3-\tau)}\mu\kappa^{1/(3-\tau)}.
    \]
For this, we will rely on the fact that the restriction of $\rsNR(\bw,\per)$ to the vertices in $[N_n]$, for an appropriate choice of $N_n$ (see \eqref{Nn-supercrit}), is a rank-1 inhomogeneous random graph. This turns out to be a rank-1 inhomogeneous random graph that is always supercritical, i.e., there is a `tiny giant' component inside the subgraph $[N_n]$ containing a constant times $N_n$ vertices. The main idea is to show that this tiny giant, together with all its {\em direct} neighbors, has size $(1+\op(1))n\percn^{1/(3-\tau)}\mu\kappa^{1/(3-\tau)}$. This completes the proof of Theorem \ref{thm:asymp-single}.

\paragraph*{Open problems.}
As remarked in the introduction, \cite{BDH18} shows that there is a giant in $\rsNR(\bw,\per)$ $\percn = \lambda n^{-\seta}$ with $\lambda \in ( \lambda_c,\infty)$ for some explicitly computable $\lambda_c>0$.
Theorem \ref{thm:asymp-single} shows that, when $\lambda = \lambda_n \to\infty$, the giant components inside $\rsNR(\bw,\per)$ and $\rNR(\bw,\per)$ have the same asymptotic sizes. 
However, if $\lambda\in ( \lambda_c,\infty)$ is fixed, then we believe that this is not the case anymore. 
More formally, we conjecture that
    \begin{eq}
    \label{critical-different}
    \frac{1}{\sqrt{n}}\big||\sC_{\sss (1)}(\percn)|-|\sC^*_{\sss (1)}(\percn)|\big| \pto \eta \in (0,\infty).
    \end{eq}
Intuitively, the methods for obtaining the size of the giant of $\rsNR(\bw,\per)$ explained above is conceptually similar to the $\lambda$ fixed case in \cite{BDH18}. 
In both cases, we identify a core of high-degree vertices that is approximately an inhomogeneous random graph, and the emergence of a `tiny giant' inside this core makes a giant appear in the whole graph. However, the key distinction in the $\lambda$ fixed case is that the graph inside the core is not a rank-one inhomogeneous random graph any more. This makes the limiting quantity in \cite{BDH18} difficult to compute explicitly. 
However, due to the non-rank-one structure of the core in \cite{BDH18}, we believe \eqref{critical-different} to be true. We leave this as an interesting open question.

Another related open problem concern the identification of the percolation phase transition for scale-free {\em erased} configuration models, where multi-edges are merged before applying percolation. The paper \cite{DHL19} studies the multi-edge setting, related to the present paper, both in the critical regime as well as in the barely supercritical regime, but the single-edge setting that corresponds to \cite{DHL19} is open. The challenge there lies in studying the emergence of the tiny giant inside high-degree vertices. 
One would a require suitable modification of \cite{BJR07} to incorporate the dependency of edge occupancy that occur in the erased configuration model.

\paragraph*{Organization. }
This paper is organised as follows. In Section \ref{sec-pf-thm1}, we prove Theorem \ref{thm:asymp-multi}, and in Section \ref{sec-pf-thm2}, we prove Theorem \ref{thm:asymp-single}.

\section{Super-critical phase for the multi-edge setting}
\label{sec-pf-thm1}
In this section, we complete the proof of Theorem~\ref{thm:asymp-multi}. We start by setting up some preliminary estimates and recalling a useful semi-martingale concentration inequality in Section~\ref{sec:preliminaries}. The main tool to prove Theorem~\ref{thm:asymp-multi} is the analysis of an exploration process that we explain in detail in Section~\ref{sec:exploration}. 
In Section~\ref{sec:large-early}, we prove that a large component is explored early in the exploration process. Finally, we complete the proof of Theorem~\ref{thm:asymp-multi} in Section~\ref{sec:proof-supercrit-NR}.
Throughout this section, we will assume that $\percn$ satisfies \eqref{perc-prob}.

\subsection{Preliminaries}\label{sec:preliminaries}
We start with the following estimate that will be used repeatedly in our proof:
\begin{lemma}[Asymptotics of Laplace-type transform] \label{lem:laplace}
Fix $t>0$ and let $t_n:= t\percn^{1/(3-\tau)}$ and $\beta_n = n\percn^{1/(3-\tau)}$ where $\percn = o(1)$. 
Then, 
\begin{eq}\label{laplace-two-forms}
\sum_{i\in [n]}  \frac{w_i}{\ell_n} \bigg( 1- \bigg(1-\frac{w_i}{\ell_n}\bigg)^{t \beta_n} \bigg) = \kappa (t_n/\mu)^{\tau-2} (1+o(1)),  
\end{eq}
where 
    \eqn{
    \label{kappa-def-NR-multiple-2}
   \kappa
    =\cf^{\tau-2}\Gamma(3-\tau),
    }
and $\Gamma(\cdot)$ is the gamma function. 
\end{lemma}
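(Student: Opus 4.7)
The plan is to reduce the sum to an explicit Gamma-function integral in three steps: (i) replace $(1-p_i)^{t\beta_n}$ by $e^{-t\beta_n p_i}$, where $p_i := w_i/\ell_n$; (ii) approximate the resulting deterministic sum by a continuous integral using the tail quantile $w(x) := \cf x^{-\alpha}$, so that $w(U)\sim F$ for $U\sim\mathrm{Unif}(0,1)$; and (iii) evaluate the limiting integral as a Gamma function. The final algebraic cancellation uses the identity $\mu = \cf(\tau-1)/(\tau-2)$, which is forced by the Pareto form of $F$.

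For steps (ii)--(iii), the sum after exponential replacement equals $(n/\ell_n)\cdot (1/n)\sum_i w_i(1-e^{-\gamma w_i})$ with $\gamma := (t_n/\mu)(1+o(1))$, and the Riemann sum approximates $(1/\mu)\,\E[W(1-e^{-\gamma W})]$ for $W\sim F$. For $F$ Pareto with density $C(\tau-1)w^{-\tau}$ on $[\cf,\infty)$, the substitution $u = \gamma w$ yields
\[
\E[W(1-e^{-\gamma W})] = C(\tau-1)\gamma^{\tau-2}\int_{\gamma \cf}^\infty u^{1-\tau}(1-e^{-u})\,\dif u \longrightarrow \frac{C(\tau-1)\Gamma(3-\tau)}{\tau-2}\,\gamma^{\tau-2}
\]
as $\gamma\to 0$, the limiting integral being evaluated by integration by parts with $U=1-e^{-u}$, $\dif V = u^{1-\tau}\dif u$. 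Direct integration of the Pareto density gives $\mu = \cf(\tau-1)/(\tau-2)$ and $C = \cf^{\tau-1}$, so the prefactor collapses to $C(\tau-1)/(\mu(\tau-2)) = \cf^{\tau-2}$, producing the claimed $\kappa(t_n/\mu)^{\tau-2}$.

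The main obstacle is the error control in step~(i). The naive bound $|p_i[(1-p_i)^{t\beta_n} - e^{-t\beta_n p_i}]| \leq t\beta_n p_i^3$ (valid when $t\beta_n p_i^2 \lesssim 1$) summed gives $t\beta_n \sum_i p_i^3$, which one can check is $o((t_n/\mu)^{\tau-2})$ only when $\tau \geq 5/2$. For smaller $\tau$, I would use the sharper bound $|e^{-t\beta_n p_i}-(1-p_i)^{t\beta_n}| \leq e^{-t\beta_n p_i}\min(t\beta_n p_i^2,\,1)$ and split the sum at $t\beta_n p_i^2 \asymp 1$: the regime $t\beta_n p_i^2 \gtrsim 1$ automatically forces $t\beta_n p_i \gtrsim \sqrt{t\beta_n}\to\infty$, making $e^{-t\beta_n p_i}$ super-exponentially small and the corresponding contribution negligible, while the bulk regime $t\beta_n p_i^2 \lesssim 1$ is handled by the cubic bound combined with a change of variables $v = t\beta_n p_i$. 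The Riemann-sum error from step~(ii), though complicated by the singularity of $w(x)$ at $x=0$, is $O(n^{\alpha-1})$ by a direct sum-versus-integral comparison using the monotonicity of $w$, and this is $o((t_n/\mu)^{\tau-2})$ since $\lambda_n\to\infty$.
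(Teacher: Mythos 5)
Your proposal is essentially the same as the paper's proof: both approximate $(1-w_i/\ell_n)^{t\beta_n}$ by $\e^{-w_it\beta_n/\ell_n}$, pass from the sum to an integral via the exact form $w_i=\cf(n/i)^\alpha$, substitute $u=\gamma w$ to produce $\int_0^\infty u^{1-\tau}(1-\e^{-u})\,\dif u=\Gamma(3-\tau)/(\tau-2)$, and collapse the constant using $\mu=\cf(\tau-1)/(\tau-2)$. The only structural difference, swapping the order of the exponential replacement and the sum-to-integral approximation, is cosmetic; you do, however, fill in a gap the paper leaves implicit by supplying an explicit error bound for the exponential replacement (the split at $t\beta_n p_i^2\asymp 1$), whereas the paper simply writes $\asymp$ for that step.
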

\begin{proof}
Using \eqref{eq:weight-assumption}, we simplify 
\begin{eq}
 \sum_{i\in [n]}  \frac{w_i}{\ell_n}\bigg( 1- \bigg(1-\frac{w_i}{\ell_n}\bigg)^{t \beta_n} \bigg) & \asymp  \frac{1}{\mu n} \sum_{i\in [n]} \cf \Big(\frac{n}{i}\Big)^{\alpha} \bigg(1- \bigg(1-\frac{\cf}{\mu n}\Big(\frac{n}{i}\Big)^\alpha\bigg)^{t\beta_n}\bigg)\\
& \asymp  \frac{\cf}{\mu} \int_0^1  x^{-\alpha} \bigg(1- \Big(1-\frac{\cf}{\mu n}x^{-\alpha}\Big)^{t\beta_n}\bigg) \dif x\\
&\asymp \frac{\cf}{\mu} 
    \int_0^1 x^{-\alpha}\big[1-\e^{-(\cf/\mu) x^{-\alpha}  t_n }\big]\dif x. 
    \end{eq}
Denote $z=(\cf/\mu) x^{-\alpha}  t_n$, so that $\dif x=- (\cf t_n/\mu )^{1/\alpha} z^{-1-1/\alpha}/\alpha \times \dif z.$ Then, by this change of variables,
    \eqan{
    \sum_{i\in [n]}  \frac{w_i}{\ell_n}\bigg( 1- \bigg(1-\frac{w_i}{\ell_n}\bigg)^{t \beta_n} \bigg)
    &\asymp \frac{1}{t_n \alpha}(\cf t_n/\mu)^{1/\alpha}
    \int_{\cf t_n/\mu}^{\infty}  z^{-1/\alpha}[1-\e^{-z}]\dif z \asymp \kappa \mu^{-(\tau-2)}t_n^{\tau-2},
    }
where
    \eqn{
    \label{kappa-def-1}
    \kappa=\frac{\cf^{\tau-1}(\tau-1)}{\mu}
    \int_0^{\infty}  z^{-(\tau-1)}[1-\e^{-z}]\dif z,
    }
and we have used the fact that $t_n\to 0$. The proof of \eqref{laplace-two-forms} now follows, with $\kappa$ as in \eqref{kappa-def-1} above. 
We close by showing the value of $\kappa$ indeed simplifies to \eqref{kappa-def-NR-multiple-2}. We first note that
    \eqn{\kappa=\frac{\cf^{\tau-1}(\tau-1)}{\mu}
    \int_0^{\infty}  z^{-(\tau-1)}[1-\e^{-z}]\dif u
    =\cf^{\tau-1}\frac{\tau-1}{(\tau-2)\mu} \Gamma(3-\tau).
    }
Further,
    \eqn{
    \label{mu-comp}
    \mu=\int_0^1 \cf u^{-\alpha}\dif u
    =\cf \frac{1}{1-\alpha}
    =\cf \frac{\tau-1}{\tau-2},
    }
so that the proof of \eqref{kappa-def-NR-multiple-2} now follows.
\end{proof}
Next, we state a semi-martingale inequality from  \cite[Lemma 2.2]{Jan94} that we will use extensively. 
Recall that a real-valued process $X$ defined on the filtered probability space $(\Omega,\cF,(\cF_t)_{t \geq  0},\PR)$ is called a semimartingale if it can be decomposed as $X_{t}=M_{t}+A_{t}$,
where $M$ is a local martingale and $A$ is a c\`adl\`ag adapted process of locally bounded variation. The process $A$ is also called the drift of $X$. The following holds for any bounded semi-martingale:
\begin{lemma}[{\cite[Lemma 2.2]{Jan94}}]\label{lem:semi-mart-conc}
If $(X(t))_{t\geq 0}$ is a bounded semimartingale with drift $(\zeta(t))_{t\geq 0}$, then 
\begin{eq}
\E \bigg[ \sup_{t\in [0,T]} |X(t)|^2\bigg] \leq 13 \E[|X(T)|^2] + 13T \int_0^T \E[\zeta(t)^2] \dif t. 
\end{eq}
\end{lemma}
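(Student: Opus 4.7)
The plan is to invoke the Doob-Meyer decomposition $X(t)=M(t)+A(t)$ with $M$ a local martingale and $A(t)=\int_0^t\zeta(s)\,\dif s$ the drift, and then control $M$ and $A$ separately using Doob's $L^2$ maximal inequality and Cauchy-Schwarz respectively. Boundedness of $X$ ensures that $M$ is a square-integrable martingale on $[0,T]$, so all moments that appear below are finite.

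For the martingale component, Doob's $L^2$ maximal inequality gives
\begin{equation*}
\E\Big[\sup_{t\leq T} M(t)^2\Big] \leq 4\,\E[M(T)^2].
\end{equation*}
For the drift component, Cauchy-Schwarz applied to the integral representation $A(t)=\int_0^t \zeta(s)\,\dif s$ yields $A(t)^2 \leq t\int_0^t \zeta(s)^2\,\dif s$, so that
\begin{equation*}
\E\Big[\sup_{t\leq T} A(t)^2\Big] \leq T\int_0^T \E[\zeta(s)^2]\,\dif s.
\end{equation*}

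Next, I would apply Young's inequality $(a+b)^2\leq(1+\theta)a^2+(1+\theta^{-1})b^2$ with $a=M(t)$, $b=A(t)$ at the supremum, and combine with the two displays above to get
\begin{equation*}
\E\Big[\sup_{t\leq T} X(t)^2\Big] \leq 4(1+\theta)\E[M(T)^2] + (1+\theta^{-1})\, T\int_0^T \E[\zeta(s)^2]\,\dif s.
\end{equation*}
To convert $\E[M(T)^2]$ back into $\E[X(T)^2]$, apply Young's inequality a second time to $M(T)=X(T)-A(T)$ with a parameter $\theta'$, obtaining $\E[M(T)^2]\leq(1+\theta')\E[X(T)^2]+(1+\theta'^{-1})\,T\int_0^T\E[\zeta(s)^2]\,\dif s$. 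Substituting produces
\begin{equation*}
\E\Big[\sup_{t\leq T} X(t)^2\Big] \leq 4(1+\theta)(1+\theta')\E[X(T)^2] + \big[4(1+\theta)(1+\theta'^{-1})+(1+\theta^{-1})\big]T\int_0^T\E[\zeta(s)^2]\,\dif s.
\end{equation*}

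The remaining step is to choose $\theta,\theta'$ so that both coefficients equal $13$. The system $4(1+\theta)(1+\theta')=13$ and $4(1+\theta)(1+\theta'^{-1})+(1+\theta^{-1})=13$ reduces to a quadratic in $1+\theta$ with positive root $1+\theta=13/10$, giving $\theta=3/10$ and $\theta'=3/2$; one then verifies both coefficients are exactly $13$. The main obstacle is not probabilistic --- Doob-Meyer, Doob's inequality and Cauchy-Schwarz do all the work --- but rather this simultaneous balancing, which is the only place the specific constant $13$ enters. A minor technical point is ensuring that the Doob-Meyer decomposition with the prescribed drift density is available, but this is built into the definition of semimartingale used in the statement, and boundedness of $X$ makes $M\in L^2([0,T])$.
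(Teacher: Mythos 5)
The paper does not prove this lemma; it is cited verbatim from Janson (1994), Lemma 2.2, so there is no in-paper argument to compare against. Your reconstruction is correct and uses the ingredients one would expect: the decomposition into local martingale plus drift, Doob's $L^2$ maximal inequality on the martingale part, Cauchy--Schwarz on the drift integral $A(t)=\int_0^t\zeta(s)\,\dif s$, and two weighted Young inequalities tuned so that both resulting coefficients equal $13$. The closing algebra checks out: eliminating $\theta'$ from the system $4(1+\theta)(1+\theta')=13$ and $4(1+\theta)(1+\theta'^{-1})+(1+\theta^{-1})=13$ gives $100(1+\theta)^2-260(1+\theta)+169=(10(1+\theta)-13)^2=0$, forcing $\theta=3/10$, $\theta'=3/2$; the double root means $13$ is exactly the optimal constant this two-parameter scheme can produce, which is consistent with it being the constant in Janson's statement.

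One spot deserves slightly more care than you give it. A priori $M$ is only a \emph{local} martingale, so Doob's $L^2$ inequality should be applied to the stopped processes $M^{T_n}$ for a localizing sequence $T_n\uparrow\infty$, and the bound passed to the limit: monotone convergence on the left-hand side, and on the right-hand side $M(T\wedge T_n)=X(T\wedge T_n)-A(T\wedge T_n)$ is dominated by the uniform bound on $X$ plus $\int_0^T|\zeta(s)|\,\dif s$, which lies in $L^2$ whenever $\int_0^T\E[\zeta(s)^2]\,\dif s<\infty$ --- and if that integral is infinite the claimed inequality is vacuous anyway. You flag this as a minor technical point, which it is, but spelling it out also makes clear why one may assume $\int_0^T\E[\zeta(s)^2]\,\dif s<\infty$ without loss of generality and hence that $M(T)\in L^2$.
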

If $X = (X_k)_{k\geq 1}$ is a discrete-time process, then it is a semimartingale with drift 
\begin{eq}\label{drift-defn}
A_k = \sum_{j=1}^{k} \E[X_j-X_{j-1} \vert \cF_{j-1}].
\end{eq}
We will use this expression later. 
\subsection{Scaling limit of the exploration process} \label{sec:exploration}
Define $\ww_i = \percn w_i$ and $\bell_n = \sum_{i\in [n]} \ww_i$. 
Let us start by noting that the distribution of $\rNR(\bw,\percn)$ is identical to  $\rNR(\bar{\bld{w}})$. For this reason, we will be analyzing $\rNR(\bar{\bld{w}})$ throughout this section. 
The analysis relies on a breadth-first exploration from \cite[Section 2.1]{BHL12}.

By a \emph{mark}, we mean a random variable $M$ having distribution 
\begin{eq}\label{eq:mark-distn}
\PR(M=i) = \frac{\ww_i}{\bell_n}=\frac{w_i}{\ell_n}, \quad \text{for }i\in [n].
\end{eq}
Note that each vertex $i$ has $\mathrm{Poisson}(\ww_i)$ many edges incident to it. 
The idea is to create a  \emph{potential neighbor} corresponding to each of these incident edges. 
One can generate the neighborhood of a vertex~$i$ by generating $\mathrm{Poisson}(\ww_i)$ many edges,  creating a potential vertex corresponding to each of these edges,  assigning i.i.d.\ marks to these potential vertices sampled from the distribution~\eqref{eq:mark-distn}, and finally identifying potential vertices with the same mark. This scheme suggests the following exploration on the graph: 

\begin{algo}[Exploration algorithm]\label{algo:exploration} \normalfont 
We first generate $(M_l)_{l\geq 1}$ from the mark distribution~\eqref{eq:mark-distn} in an i.i.d.\ manner. 
Think of $(M_l)_{l\geq 1}$ as corresponding to the {\em labels} of the explored vertices. The exploration process then proceeds as follows:
\begin{itemize}
    \item[(S0)] At step 1, we include $M_1$ in the explored set and create $\mathrm{Poisson}(\ww_{M_1})$ many potential neighbors. 
    
    \item[(S1)] At step $l$, if $M_l \in \{M_j\colon j\in[l-1]\}$, then no new vertex is found. 
    If there is a potential vertex, then we pick one of them and delete it. If there are no potential vertices then we do not do anything. Proceed to time $l+1$. 
    
    \item[(S2)]  If $M_l \notin \{M_j\colon j\in[l-1]\}$, then a new vertex is found at step $l$.
    If there is a potential vertex, then we pick one of them and assign it label $M_l$. Delete this vertex from the potential vertex set and add it to the explored set. If there is no potential vertex, then we add a new vertex labelled $M_l$ to the explored set. Finally, in either case, create $\mathrm{Poisson}(\ww_{M_l})$ many new potential neighbors.
\end{itemize}
\end{algo}
Consider the following walk associated to the above exploration process: $Z_n(0) = 0$, and 
\begin{eq} \label{eq:exploration-potential}
Z_n(l) = Z_n({l-1}) + X_n(l) - 1, 
\end{eq}
where the conditional distribution of $X_n(l)$, conditionally on $\{M_1,\dots, M_l\}$ and $\{X_n(1),\dots, X_n(l-1)\}$, is given by a $ \mathrm{Poisson}(\ww_{M_l}) \ind{M_l \notin \{M_1,\dots,M_{l-1}\}}$. 
We can think of $X_n(l)$ as the number of potential neighbors added at time $l$. A component of size at least $2$ is explored between times $l_1$ and $l_2$ ($l_2 > l_1+1$) if $\min_{u\leq l_1} Z_n(u) = \min_{u\leq l_2} Z_n(u) - 1$.

Throughout, we let $\cF_n(l)$ denote the sigma-algebra with respect to which $\{M_1,\dots, M_l\}$ and $\{X_n(1),\dots, X_n(l)\}$ are measurable. Let $V_l = \{M_1,\dots, M_l\}$ be the set of marks discovered up to time $l$. 
We can simplify the above walk as 
\begin{eq}
Z_n(l) = Z_n(0) + \sum_{i\in [n]} \xi_n(i) \ind{i\in V_l} -l, 
\end{eq}
where $(\xi_n(i))_{i\in [n]}$ is a sequence of independent random variables with $\xi_n(i) \sim \mathrm{Poisson}(\bar w_i)$. Also, $(\xi_n(i))_{i\in [n]}$ is independent of $(M_l)_{l\geq 1}$. 
Let $\beta_n = n\percn^{1/(3-\tau)}$, and define the rescaled walk by 
\begin{eq}
\bar{Z}_n(t) = \beta_n^{-1} Z_n(\lfloor t \beta_n\rfloor).   
\end{eq}

Our main result concerning the process limit of the exploration process is the following theorem:

\begin{theorem}[Exploration process limit]\label{thm:conv-expl-proc} Fix any $T>0$. 
Under Assumption~\ref{assumption-NR} and \eqref{perc-prob},  as $n\to \infty$, 
\begin{eq}
\sup_{t \in [0, T]} \big|\bar{Z_n} (t) - z(t) \big| \pto 0,
\end{eq}where $z(t) = \mu^{3-\tau} \kappa t^{\tau-2} - t$ and  $\kappa = \cf^{\tau-2} \Gamma(3-\tau)$. 
\end{theorem}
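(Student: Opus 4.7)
The plan is to analyze $Z_n$ via its Doob decomposition, pin down the deterministic leading order of its mean via Lemma~\ref{lem:laplace}, and show that both the martingale part and the centered compensator are of smaller order than $\beta_n$, uniformly in $t\in[0,T]$. For the mean, writing $X_n(l)=\xi_n(M_l)\ind{M_l\notin V_{l-1}}$ with $\xi_n(i)\sim\mathrm{Poisson}(\bar w_i)$ independent of $(M_k)$, the identity $\prob{i\in V_l}=1-(1-w_i/\ell_n)^l$ yields
\[
\E[Z_n(l)] \;=\; \sum_{i\in [n]}\bar w_i\bigl(1-(1-w_i/\ell_n)^l\bigr) \;-\; l.
\]
Taking $l=\lfloor t\beta_n\rfloor$ and applying Lemma~\ref{lem:laplace} to the weights $\bar w_i$ (using $\bar w_i/\bar\ell_n=w_i/\ell_n$ and $\bar\ell_n\asymp\percn n\mu$) gives $\E[Z_n(\lfloor t\beta_n\rfloor)]/\beta_n\to\mu^{3-\tau}\kappa t^{\tau-2}-t=z(t)$.

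To control the Poisson fluctuations, I would condition on $\cG_n:=\sigma(M_k:k\ge 1)$. Since $(\xi_n(i))$ is independent of $\cG_n$, as $l$ grows
\[
Z_n(l)-\E[Z_n(l)\mid\cG_n] \;=\; \sum_{j\in V_l}(\xi_n(j)-\bar w_j)
\]
is a $\cG_n$-conditional martingale built from independent centered Poissons with total conditional variance bounded by $\sum_j\bar w_j=\bar\ell_n$. Doob's $L^2$ inequality gives $\E[\sup_{l\le T\beta_n}(Z_n(l)-\E[Z_n(l)\mid \cG_n])^2]\le 4\bar\ell_n=O(\percn n)$, and $\sqrt{\percn n}/\beta_n\asymp\lambda_n^{(1-\tau)/(2(3-\tau))}\to 0$ as $\lambda_n\to\infty$, so this contribution is $\oP(\beta_n)$ uniformly in $l\le T\beta_n$.

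The remaining compensator fluctuation is handled via $Y_l:=\E[Z_n(l)\mid\cG_n]+l=\sum_i\bar w_i\ind{\tau_i\le l}$, where $\tau_i:=\min\{k:M_k=i\}$. For $i\ne j$, $\prob{\tau_i>l,\tau_j>l}=(1-w_i/\ell_n-w_j/\ell_n)^l\le(1-w_i/\ell_n)^l(1-w_j/\ell_n)^l$, so the indicators $(\ind{\tau_i\le l})_i$ are pairwise negatively correlated and $\Var(Y_l)\le\sum_i\bar w_i^2\min(1,lw_i/\ell_n)$. I would then split this sum at the crossover index $i^\star\asymp n\percn^{(\tau-1)/(3-\tau)}$ (the place where $lw_i/\ell_n\asymp 1$). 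For $i\le i^\star$, $\sum_i w_i^2=O(n^{2\alpha})$ (valid because $2\alpha>1$) contributes $O(\percn^2 n^{2\alpha})$; for $i>i^\star$, the tail estimate $\sum_{i>i^\star}w_i^3\asymp n\percn^{(\tau-4)/(3-\tau)}$ (valid because $3\alpha>1$) contributes $O(n\percn)$. Plugging $\percn=\lambda_n n^{-(3-\tau)/(\tau-1)}$ into $\beta_n^2=n^2\percn^{2/(3-\tau)}$ shows both terms equal $\lambda_n^{-c}\beta_n^2$ for some $c>0$, so $\Var(Y_l)=o(\beta_n^2)$ and Chebyshev yields pointwise-in-$t$ convergence in probability of $Y_{\lfloor t\beta_n\rfloor}/\beta_n$ to its deterministic limit $\mu^{3-\tau}\kappa t^{\tau-2}$.

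Finally, because $l\mapsto Y_l$ is non-decreasing, $t\mapsto Y_{\lfloor t\beta_n\rfloor}/\beta_n$ is monotone with a continuous monotone limit, so a standard Dini-type argument upgrades pointwise convergence in probability to uniform convergence on $[0,T]$. Combined with the Poisson-fluctuation bound above and $\lfloor t\beta_n\rfloor/\beta_n\to t$ uniformly, this completes the proof. The main obstacle is the variance estimate in the previous paragraph: the naive global bound $\Var(Y_l)\le(l\percn^2/\ell_n)\sum_i w_i^3$ scales like $\lambda_n^{(5-2\tau)/(3-\tau)}\beta_n^2$ and fails for $\tau\in(2,5/2]$, so the crossover split at $i^\star$---separating vertices that are ``typically discovered'' by time $l$ from those that are not---is essential for capturing the right order. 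An alternative to the conditioning argument above would be to apply Lemma~\ref{lem:semi-mart-conc} directly to the centered semimartingale $Z_n-\E Z_n$, but the same split-based variance estimate would still be required.
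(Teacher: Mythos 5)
Your proposal is correct and reaches the result, but it takes a noticeably different route from the paper's at two points, and one of your stated claims about what is ``essential'' is not accurate.

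First, the decomposition. You split $Z_n=(Z_n-\E[Z_n\mid\cG_n])+\E[Z_n\mid\cG_n]$ and note $\E[Z_n(l)\mid\cG_n]=\sum_{i\in V_l}\bar w_i-l$. This is exactly the paper's intermediate process $S_n(l)$, so the two decompositions coincide. For the Poisson-fluctuation part $Z_n-S_n$ you use Doob's $L^2$ inequality conditionally on $\cG_n$ with the crude variance bound $\sum_i\bar w_i=\bar\ell_n=O(n\percn)$. The paper instead computes $\E[(Z_n(l)-S_n(l))^2]=\sum_i\bar w_i\,\PR(i\in V_l)=O(\beta_n)$ (a sharper bound, since $\beta_n\ll\bar\ell_n$), observes that the drift of $Z_n-S_n$ is identically zero, and invokes Janson's semimartingale inequality (Lemma~\ref{lem:semi-mart-conc}) for uniform control. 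Both work because $\bar\ell_n/\beta_n^2\to 0$, so this is merely a difference in tightness, not correctness.

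Second, the compensator. Here the real divergence appears. The paper again applies Lemma~\ref{lem:semi-mart-conc} to $S_n-\E S_n$: it bounds $\E[(S_n(l)-\E S_n(l))^2]\leq\percn^2\sum_i w_i^2\,\PR(i\in V_l)$, then simply pulls out one factor $w_i\leq w_1$ to get $\leq(w_1\percn/\beta_n)\cdot(\percn/\beta_n)\sum_i w_i\PR(i\in V_l)$, in which the second factor is $O(\beta_n^2/\beta_n^2)=O(1)$ by the mean computation and the first factor equals $C\lambda_n^{-(\tau-2)/(3-\tau)}\to 0$; a completely parallel bound handles the drift term. You instead bound the variance pointwise via negative association and $\min(1,lw_i/\ell_n)$, split at the crossover index $i^\star\asymp n\percn^{(\tau-1)/(3-\tau)}$, and then upgrade pointwise Chebyshev to uniformity via a Dini/P\'olya monotonicity argument (which does work here, since $Y_l$ is non-decreasing and the limit $\mu^{3-\tau}\kappa t^{\tau-2}$ is continuous). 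Your split is arithmetically correct, but your closing remark that the crossover split ``would still be required'' under the semimartingale-lemma approach is wrong: the paper's one-line trick $w_i^2\leq w_1w_i$ gives the right order without any split, precisely because the barely supercritical scaling $\lambda_n\to\infty$ makes $w_1\percn/\beta_n\to 0$. So your estimate is a valid alternative, not a necessary complication.

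In summary: the proposal is a correct proof. It buys a bit of self-containment (Doob plus Chebyshev plus Dini instead of citing Janson's lemma), at the cost of a more intricate variance estimate and an inaccurate remark about what is unavoidable. The paper's route via Lemma~\ref{lem:semi-mart-conc} and the $w_1$ domination is shorter and gives uniform control in one stroke.
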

Let $\bld{S}_n$ be the walk defined as 
\begin{eq}
S_n(l) =  Z_n(0) + \sum_{i\in [n]} \ww_i \ind{i\in V_l} -l. 
\end{eq}
and define $\bar{S}_n(t) = \beta_n^{-1} S_n(\lfloor t\beta_n\rfloor).$ 
Henceforth, we will assume Assumption~\ref{assumption-NR} and \eqref{perc-prob} hold in this section.
We first prove the following 
proposition: 
\begin{proposition}[Modified exploration process limit]\label{prop:S-limit}
For any fixed $T>0$, as $n\to \infty$, 
\begin{eq}
\sup_{t \in [0, T]} \big|\bar{S}_n (t) - z(t) \big| \pto 0,
\end{eq}where $z(t) = \mu^{3-\tau} \kappa t^{\tau-2} - t$ and  $\kappa = \cf^{\tau-2} \Gamma(3-\tau)$. 
\end{proposition}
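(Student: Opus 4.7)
The plan is to prove Proposition~\ref{prop:S-limit} by a first-moment/second-moment computation at each fixed $t$, and then promote pointwise convergence in probability to uniform convergence on $[0,T]$ via a monotonicity (Polya-type) argument. Let $U_l:=\sum_{i\in[n]}\ww_i\ind{i\in V_l}$, so that $S_n(l)=U_l-l$ and $l\mapsto U_l$ is non-decreasing; this monotonicity is what makes the uniformity step clean. One could alternatively invoke the semi-martingale inequality Lemma~\ref{lem:semi-mart-conc} on the martingale-plus-compensator decomposition of $S_n$, but the approach below is conceptually shorter.

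For the mean, since $\PR(i\in V_l)=1-(1-w_i/\ell_n)^l$,
\begin{equation*}
\E[S_n(l)] = \percn\sum_{i\in[n]} w_i\Big(1-(1-w_i/\ell_n)^l\Big) - l.
\end{equation*}
Setting $l=\lfloor t\beta_n\rfloor$, factoring $\ell_n$, applying Lemma~\ref{lem:laplace}, and using $\ell_n/n\to\mu$ together with the exponent identity $1+(\tau-2)/(3-\tau)=1/(3-\tau)$, the first term on the right-hand side equals $\mu^{3-\tau}\kappa t^{\tau-2}\beta_n(1+o(1))$. Hence $\beta_n^{-1}\E[S_n(\lfloor t\beta_n\rfloor)]\to z(t)$ for each $t\in[0,T]$.

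For the variance, the key structural observation is the pairwise negative correlation of the occupation indicators $(\ind{i\in V_l})_{i\in[n]}$: for $i\neq j$ and $p_k=w_k/\ell_n$,
\begin{equation*}
\Cov\big(\ind{i\in V_l},\ind{j\in V_l}\big) = (1-p_i-p_j)^l - (1-p_i)^l(1-p_j)^l \leq 0,
\end{equation*}
since $(1-p_i)(1-p_j)\geq 1-p_i-p_j$. Consequently $\Var(S_n(l))\leq \sum_i\ww_i^2 = \percn^2\sum_i w_i^2 = O(\percn^2 n^{2\alpha})$, where $2\alpha>1$ makes $\sum_{i\geq 1} i^{-2\alpha}$ summable via the explicit form $w_i=\cf(n/i)^\alpha$. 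Dividing by $\beta_n^2 = n^2\percn^{2/(3-\tau)}$ and substituting $\percn=\lambda_n n^{-(3-\tau)/(\tau-1)}$, the $n$-powers cancel exactly and the ratio reduces to $\lambda_n^{-2(\tau-2)/(3-\tau)}$, which vanishes since $\lambda_n\to\infty$ and $\tau>2$. This is the step I expect to be the main obstacle: a naive bound such as $\Var(S_n(l))\leq(\sum_i \ww_i)^2\asymp n^2\percn^2$ would give a useless variance-to-signal ratio of $\percn^{-2(\tau-2)/(3-\tau)}\to\infty$, so the negative-correlation observation, together with the scale-free estimate $\sum_i w_i^2=O(n^{2\alpha})$, is essential. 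Chebyshev's inequality then yields $\bar S_n(t)\pto z(t)$ for each fixed $t\in[0,T]$.

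For the uniformity upgrade, note that $\beta_n^{-1}U_{\lfloor t\beta_n\rfloor} = \bar S_n(t) + \lfloor t\beta_n\rfloor/\beta_n$ is non-decreasing in $t$, and its pointwise-in-probability limit $z(t)+t=\mu^{3-\tau}\kappa t^{\tau-2}$ is continuous and non-decreasing on $[0,T]$. A standard Polya argument---choose a finite grid on which the continuous limit oscillates less than $\varepsilon$, apply Chebyshev via a union bound at the grid points, and sandwich the monotone process between adjacent grid values---upgrades pointwise convergence in probability to uniform convergence in probability. The rounding correction $\sup_{t\in[0,T]}|\lfloor t\beta_n\rfloor/\beta_n - t|=O(\beta_n^{-1})=o(1)$ then completes the proof.
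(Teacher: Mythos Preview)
Your proof is correct but follows a genuinely different route from the paper's. The paper proves uniform convergence by directly bounding $\E\big[\sup_{t\in[0,T]}|\bar S_n(t)-\E[\bar S_n(t)]|^2\big]$ via the semi-martingale inequality of Lemma~\ref{lem:semi-mart-conc}: after establishing negative correlation of the $\ind{i\in V_l}$ (as you do), they bound the variance at a fixed time by $\beta_n^{-2}\sum_i\ww_i^2\PR(i\in V_{\lfloor t\beta_n\rfloor})$ (retaining the occupation probability rather than dropping it), compute the drift of $\bar S_n-\E[\bar S_n]$ explicitly, and feed both into Lemma~\ref{lem:semi-mart-conc}. You instead take the cruder bound $\Var(S_n(l))\leq\sum_i\ww_i^2=O(\percn^2 n^{2\alpha})$, check that this already beats $\beta_n^2$, and then upgrade pointwise convergence to uniform convergence by exploiting the monotonicity of $l\mapsto U_l$ through a Polya sandwich. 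Your argument is more elementary (no drift computation, no appeal to Lemma~\ref{lem:semi-mart-conc}) and the coarser variance bound suffices; the paper's approach, on the other hand, is the one that also handles Proposition~\ref{prop-diff}, where no monotonicity is available, so their method is reused rather than tailored. Both are valid.
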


\begin{proof}
First, note that, by \eqref{eq:mark-distn}, 
\begin{eq}\label{eq:prob-i-in-V}
\PR(i\in V_l) = 1- \PR(i\notin V_l)  =1- \bigg(1-\frac{w_i}{\ell_n} \bigg)^l. 
\end{eq}
Therefore, taking $l = \lfloor t \beta_n \rfloor $, we get
\begin{eq}\label{eq:expt-Z-n-1}
\E[\bar{S}_n(t)] &\asymp \frac{1}{\beta_n} \sum_{i\in [n]}  \ww_i\bigg(1- \bigg(1-\frac{w_i}{\ell_n}\bigg)^{t \beta_n} \bigg)- t.
\end{eq}
Applying Lemma~\ref{lem:laplace} yields
\begin{eq}\label{expt-S-comp}
\E[\bar{S}_n(t)]& \asymp  \frac{\ell_n \percn }{n \percn^{1/(3-\tau)}} \sum_{i\in [n]}  \frac{w_i}{\ell_n} \bigg(1- \bigg(1-\frac{w_i}{\ell_n}\bigg)^{t \beta_n} \bigg) - t\\
&\asymp\frac{ \mu \percn }{\percn^{1/(3-\tau)}}\times  \kappa \big(t \percn^{1/(3-\tau)}/\mu\big) ^{\tau-2} - t\\
& \asymp\frac{ \mu^{3-\tau} \percn }{\percn^{1/(3-\tau)}}\times  \kappa \big(t \percn^{1/(3-\tau)} \big) ^{\tau-2} - t\\
& \asymp  \kappa t^{\tau-2} \mu^{3-\tau} - t=z(t),
\end{eq}
and all the above approximations hold uniformly over $t\leq T$, where $T>0$ is fixed. 
The proof follows if we can show that 
\begin{eq}\label{expt-sup-S}
\lim_{n\to\infty}\E \bigg[\sup_{t\in [0,T]} \big|\bar{S}_n(t) - \E[\bar{S}_n(t)]\big|^2\bigg] = 0. 
\end{eq}
We will apply Lemma~\ref{lem:semi-mart-conc} to the semi-martingale $(\bar{S}_n(t) - \E[\bar{S}_n(t)])_{t\geq 0}$ whose drift can be computed by \eqref{drift-defn}. 
Indeed, 
\begin{eq}\label{var-S-n}
\E[|S_n(l) - \E[S_n(l)]|^2] = \sum_{i\in [n]} \ww_i^2 \Var ( \ind{i\in V_l}) + \sum_{i,j\in [n]: i\neq j} \ww_i \ww_j \Cov(\ind{i\in V_l}, \ind{j\in V_l}).  
\end{eq}
If $0< a,b<x$, then $(1-a/x) (1-b/x) \geq 1- (a+b)/x$. Therefore, for $i\neq j$,
\begin{eq}\label{negative-correlation}
\PR(i,j \notin V_l) = \bigg(1-\frac{w_i+w_j}{\ell_n} \bigg)^l \leq \bigg(1-\frac{w_i}{\ell_n} \bigg)^l\bigg(1-\frac{w_j}{\ell_n} \bigg)^l = \PR(i \notin V_l)\PR(j \notin V_l),
\end{eq}
and hence the random variables $\ind{i\in V_l}$ and $\ind{j\in V_l}$ are negatively correlated. 
Thus, we can drop the covariance term in \eqref{var-S-n} to arrive at an upper bound. 
Using \eqref{eq:prob-i-in-V}, we conclude that
\begin{eq}\label{eq:var-bound-a}
\E[|\bar{S}_n(t) - \E[\bar{S}_n(t)]|^2] &\leq  \beta_n^{-2} \sum_{i\in [n]} \ww_i^2 \PR(i\in V_{\lfloor t \beta_n \rfloor}) \\
&\leq \frac{\percn^2}{\beta_n^2} \sum_{i\in [n]} w_i^2\bigg(1- \bigg(1-\frac{w_i}{\ell_n}\bigg)^{t \beta_n} \bigg) \\
& \leq \frac{w_1 \percn }{\beta_n} \times \frac{\percn}{\beta_n} \sum_{i\in [n]} w_i\bigg(1- \bigg(1-\frac{w_i}{\ell_n}\bigg)^{t \beta_n} \bigg).
\end{eq}
By the computations in \eqref{expt-S-comp}, the second factor is $O(1)$. By \eqref{perc-prob}, the first factor is 
$$
\frac{w_1 \percn }{\beta_n} \leq C \frac{n^{\alpha} \percn}{n \percn^{1/(3-\tau)} }\leq  
C\big(n^\alpha \lambda_n^{1/(3-\tau)} n^{-1/(\tau-1)} \big)^{-(\tau-2)} = C \lambda_n^{-(\tau-2)/(3-\tau)}  \to 0,$$
since 
$\lambda_n \to \infty$.
Thus, for any fixed $t>0$, 
\begin{eq}\label{eq:expt-particular}
\E[|\bar{S}_n(t) - \E[\bar{S}_n(t)]|^2] \to 0. 
\end{eq}
Next we consider the drift of $(\bar{S}_n(t)  )_{t\geq 0}$. Indeed, 
\begin{eq}
\big( \ind{i\in V_l} - \PR(i\in V_l \mid \cF_n(l-1)))_{l\geq 0} \text{ is a martingale with respect to } (\cF_n(l))_{l\geq 0}. 
\end{eq}
Therefore, the drift of $(S_n(l) - \E[S_n(l)] )_{l\geq 0}$ at time $l$ is given by 
\begin{eq}
\zeta_n(l):=\sum_{i\in [n]} \ww_i \big( \PR(i\in V_l \mid \cF_n(l-1)) - \PR(i\in V_l)\big).
\end{eq}
Now,
\begin{eq}\label{drift-expression}
\PR(i\in V_l \mid \cF_n(l-1)) = \ind{i\in V_{l-1}} + \frac{w_i}{\ell_n} \ind{i\notin V_{l-1}}=\frac{w_i}{\ell_n}+ \Big(1-\frac{w_i}{\ell_n}\Big) \ind{i\in V_{l-1}}. 
\end{eq}
Using the negative correlation again between $\ind{i\in V_l}$ and $\ind{j\in V_l}$ for $i\neq j$, 
\begin{eq}\label{square-zeta}
\E[\zeta_n(l)^2] &\leq  \sum_{i\in [n]} \ww_i^2 \Var (\PR(i\in V_l \mid \cF_n(l-1))) \\
&= \sum_{i\in [n]} \ww_i^2\Big(1-\frac{w_i}{\ell_n}\Big)^2  \Var (\ind{i\in V_{l-1}}) \leq \sum_{i\in [n]} \ww_i^2  \PR(i\in V_{l-1}). 
\end{eq}
Using the computation in \eqref{eq:var-bound-a}, we get 
\begin{eq}\label{drift-small}
\frac{1}{\beta_n^2}\E[\zeta_n(\lfloor t\beta_n\rfloor)^2] \to 0,
\end{eq}
uniformly over $t\in [0,T]$, for any fixed $T>0$. 
Finally, an application of Lemma~\ref{lem:semi-mart-conc} together with \eqref{eq:expt-particular} and \eqref{drift-small} completes the proof of \eqref{expt-sup-S}. 
Thus, the proof of Proposition~\ref{prop:S-limit} follows by combining \eqref{expt-S-comp} and \eqref{expt-sup-S}. 
\end{proof}
Next we show that the two processes $\bld{Z}_n$ and $\bld{S}_n$ are uniformly close: 
\begin{proposition}[Closeness to modified process]\label{prop-diff}
For any fixed $T>0$, as $n\to \infty$, 
\begin{eq}
\sup_{t \in [0, T]} \big|\bar{Z}_n (t) - \bar{S}_n(t)\big| \pto 0. 
\end{eq}
\end{proposition}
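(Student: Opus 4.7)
}
The plan is to analyze the difference
\begin{eq}
D_n(l) := Z_n(l)-S_n(l)=\sum_{i\in [n]}(\xi_n(i)-\ww_i)\ind{i\in V_l},
\end{eq}
and show that $\beta_n^{-1}\sup_{l\leq T\beta_n}|D_n(l)|\pto 0$ by a maximal $L^2$ inequality. Note that the increment is
\begin{eq}
D_n(l)-D_n(l-1)=(\xi_n(M_l)-\ww_{M_l})\ind{M_l\notin V_{l-1}}.
\end{eq}
I would first check that $D_n$ is a martingale with respect to the filtration $\cH_n(l)$ generated by $(M_j)_{j\leq l}$ together with $\{\xi_n(i):i\in V_l\}$. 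The key observation is that $\xi_n(j)$ is independent of $\cH_n(l-1)$ whenever $j\notin V_{l-1}$, and has mean $\ww_j$; hence, on the event $\{M_l=j,\,j\notin V_{l-1}\}$, the conditional expectation of $\xi_n(M_l)-\ww_{M_l}$ given $\cH_n(l-1)$ vanishes, and on the complementary event the increment itself is zero.

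Next, I would compute the second moment of $D_n$ at time $L=\lfloor T\beta_n\rfloor$. Since the $(\xi_n(i))_{i\in[n]}$ are mutually independent and independent of $(V_l)_{l\geq 0}$, conditioning on $V_L$ immediately gives
\begin{eq}
\E[D_n(L)^2]=\sum_{i\in [n]}\Var(\xi_n(i))\,\PR(i\in V_L)=\sum_{i\in [n]}\ww_i\,\PR(i\in V_L)=\percn\ell_n\sum_{i\in [n]}\frac{w_i}{\ell_n}\bigg(1-\bigg(1-\frac{w_i}{\ell_n}\bigg)^{L}\bigg).
\end{eq}
Applying Lemma~\ref{lem:laplace} exactly as in the computation of \eqref{expt-S-comp} yields $\E[D_n(L)^2]=\Theta(\beta_n)$.

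Finally, I would invoke Lemma~\ref{lem:semi-mart-conc} with the semimartingale $(D_n(\lfloor t\beta_n\rfloor))_{t\geq 0}$, whose drift is identically zero by the martingale property just established. This gives
\begin{eq}
\E\bigg[\sup_{t\in[0,T]}D_n(\lfloor t\beta_n\rfloor)^2\bigg]\leq 13\,\E[D_n(\lfloor T\beta_n\rfloor)^2]=O(\beta_n),
\end{eq}
and dividing by $\beta_n^2$ yields $\E[\sup_{t\in[0,T]}|\bar Z_n(t)-\bar S_n(t)|^2]=O(\beta_n^{-1})$. Since $\beta_n=n\percn^{1/(3-\tau)}\to\infty$ under \eqref{perc-prob} (indeed $\beta_n=\lambda_n^{1/(3-\tau)}n^{(\tau-2)/(\tau-1)}$), this bound tends to zero and the desired convergence in probability follows.

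The main obstacle is setting up the filtration so that $D_n$ is genuinely a martingale: one must include $\xi_n(i)$ in $\cH_n(l)$ exactly at the moment $i$ enters $V_l$, while keeping $\xi_n(j)$ for $j\notin V_l$ independent of $\cH_n(l)$. Once this bookkeeping is in place, the rest is a clean variance computation plus one application of Lemma~\ref{lem:laplace}, which has already been carried out for the analogous estimate \eqref{eq:var-bound-a} in the proof of Proposition~\ref{prop:S-limit}.
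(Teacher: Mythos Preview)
Your proposal is correct and follows essentially the same route as the paper: show $D_n(l)=Z_n(l)-S_n(l)$ is a martingale (zero drift), compute $\E[D_n(\lfloor T\beta_n\rfloor)^2]=\sum_{i}\ww_i\PR(i\in V_L)=O(\beta_n)$ via the estimate behind \eqref{expt-S-comp}, and then apply Lemma~\ref{lem:semi-mart-conc}. Your filtration $\cH_n(l)$ coincides with the paper's $\cF_n(l)$ (since $X_n(j)=\xi_n(M_j)\ind{M_j\notin V_{j-1}}$), so the extra bookkeeping you flag as an obstacle is in fact already built into the paper's setup.
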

\begin{proof} It suffices to prove that 
\begin{eq}
\lim_{n\to\infty}\E \bigg[\sup_{t\in [0,T]} \big|\bar{Z}_n(t) - \bar{S}_n(t)\big|^2\bigg] = 0,
\end{eq}
for which we will again use Lemma~\ref{lem:semi-mart-conc}. 
Indeed, 
\begin{eq}\label{diff-expt}
\E[|Z_n(l) - S_n(l)|^2] = \sum_{i\in [n]} \E[(\xi_n(i) - \ww_i)^2 \ind{i\in V_l}]  = \sum_{i\in [n]} \ww_i \PR(i\in V_l) = O(\beta_n),
\end{eq}
where the last step follows using \eqref{expt-S-comp}. 
Using the independence of $(\xi_n(i))_{i\in [n]}$ and $(M_l)_{l\geq 1}$, we see that 
\begin{eq}
\big((\xi_n(i) - \ww_i) \ind{i\in V_l}\big)_{l\geq 0}\text{ is a martingale with respect to } (\cF_n(l))_{l\geq 0},
\end{eq}
and thus the drift of $(Z_n(l) - S_n(l))_{l\geq 0}$ is zero at all the times. Applying Lemma~\ref{lem:semi-mart-conc}, together with \eqref{diff-expt}, the proof of Proposition~\ref{prop-diff} follows. 
\end{proof}
\medskip

\begin{proof}[Proof of Theorem~\ref{thm:conv-expl-proc}] The proof follows immediately from Propositions~\ref{prop:S-limit}~and~\ref{prop-diff}.
\end{proof}

\paragraph*{Dealing with repeat vertices.} We end this section by analyzing how many times we encounter repeated marks during the exploration process. We call an occurrence of a repeat of a mark a {\em repeat vertex}.
Recall that the exploration process in \eqref{eq:exploration-potential} removes a \emph{potential} vertex at each step, and thus the excursion length of the process in \eqref{eq:exploration-potential} counts the total number of potential vertices encountered during the exploration of components, rather than the component sizes. 
In order to know the component sizes from the excursion lengths of the exploration process, we need to show that most of the potential vertices become vertices of the graph during the exploration, and there are negligibly many repeat vertices. 
We prove the following: 
\begin{proposition}
[Few repeat vertices]
\label{prop:real-vertices}
Let $R(k) = \sum_{l=1}^k \ind{M_l \in \{M_1,\dots,M_{l-1}\}}$ denote the number of repeat vertices discovered until time $k$. Then, for any fixed $t>0$, $\beta_n^{-1}R(\floor{t\beta_n}) \xrightarrow{\sss \PR} 0$. 
\end{proposition}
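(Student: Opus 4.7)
The plan is to compute $\E[R(\lfloor t\beta_n\rfloor)]$ directly and show that it is $o(\beta_n)$; convergence in probability will then follow by Markov's inequality.

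Since the mark $M_l$ is drawn independently of $V_{l-1}$ from the distribution in \eqref{eq:mark-distn}, I would write
\begin{equation*}
\E[R(k)] \;=\; \sum_{l=1}^{k}\PR(M_l \in V_{l-1}) \;=\; \sum_{l=1}^{k}\sum_{i\in[n]} \frac{w_i}{\ell_n}\,\PR(i\in V_{l-1}) \;=\; \sum_{l=1}^{k} g_n(l),
\end{equation*}
where, by \eqref{eq:prob-i-in-V}, $g_n(l) := \sum_{i\in[n]} (w_i/\ell_n)\bigl(1-(1-w_i/\ell_n)^{l-1}\bigr)$. This is exactly the Laplace-type sum already controlled by Lemma \ref{lem:laplace}: for $l-1 \asymp s\beta_n$ with $s\in(0,t]$, one has $g_n(l)=(1+o(1))\kappa (s\percn^{1/(3-\tau)}/\mu)^{\tau-2}$.

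Next, because each extra exploration step can only enlarge $V_{l-1}$, the function $l\mapsto g_n(l)$ is nondecreasing, so I would use the crude but sufficient bound $\E[R(k)] \leq k\, g_n(k)$. Setting $k=\lfloor t\beta_n\rfloor = \Theta\bigl(n\percn^{1/(3-\tau)}\bigr)$, Lemma \ref{lem:laplace} gives $g_n(k)=\Theta\bigl(\percn^{(\tau-2)/(3-\tau)}\bigr)$, and therefore
\begin{equation*}
\frac{\E[R(k)]}{\beta_n} \;\leq\; \frac{k}{\beta_n}\,g_n(k) \;=\; O\bigl(\percn^{(\tau-2)/(3-\tau)}\bigr) \;\longrightarrow\; 0,
\end{equation*}
since $\tau\in(2,3)$ and $\percn\to 0$ by \eqref{perc-prob}. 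An application of Markov's inequality then yields $\beta_n^{-1}R(\lfloor t\beta_n\rfloor)\xrightarrow{\sss\PR} 0$, as required.

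There is no serious obstacle once Lemma \ref{lem:laplace} is in hand; the proof is essentially a bookkeeping step. The one conceptual point worth highlighting is that the argument uses $\percn\to 0$ in an essential way: it is exactly the factor $\percn^{(\tau-2)/(3-\tau)}$ that pushes the expected number of repeats below the scale $\beta_n$. Heuristically, if $\liminf \percn>0$ one would expect repeats to form a positive fraction of the exploration, which is consistent with the qualitatively different behaviour of percolation outside the barely supercritical window.
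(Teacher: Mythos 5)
Your proposal is correct and matches the paper's argument in essentials: both bound $\E[R(\lfloor t\beta_n\rfloor)]$ by monotonicity in $l$, reduce it to the same Laplace-type sum (your $g_n$ is exactly the quantity controlled in Lemma~\ref{lem:laplace} and in the computation~\eqref{expt-S-comp}), deduce that $\E[R(\lfloor t\beta_n\rfloor)]/\beta_n = O(\percn^{(\tau-2)/(3-\tau)}) \to 0$, and finish with Markov's inequality. If anything, your direct unconditional computation is phrased slightly more cleanly than the paper's passage through the conditional probability $\PR(M_l\in V_{l-1}\mid\cF_n(l-1))$, but the underlying estimate is identical.
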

\begin{proof}
Note that 
\begin{eq}
\E[R(k)] &= \E\bigg[\sum_{l = 1}^k \PR(M_l \in \{M_1,\dots,M_{l-1}\} \mid \sF_n(l-1))\bigg] \\
&= \E\bigg[\sum_{l = 1}^k \frac{1}{\bell_n} \sum_{j \in [n] } \ww_j \ind{j\in V_{l-1}}\bigg] \leq \frac{k}{\bell_n} \times \E\bigg[  \sum_{j \in [n] } \ww_j\ind{j\in V_{k-1}}\bigg] .
\end{eq}
Take $k = \floor{t\beta_n}$. 
Recall from the computations in \eqref{expt-S-comp} that the second factor is $O(\beta_n)$, and therefore, by Markov's inequality, $\sum_{j \in [n] } \ww_j\ind{j\in V_{k-1}} =\OP(\beta_n)$.
Thus, 
\begin{eq}
\frac{R(\floor{t\beta_n})}{\beta_n} \leq \frac{t\beta_n}{ \beta_n \bell_n} \times \OP(\beta_n) = \OP(\percn^{(\tau-2)/(3-\tau)}) \pto 0,
\end{eq}
and the proof follows. 
\end{proof}

\subsection{Large components are explored early} \label{sec:large-early}
In this section, we prove that the exploration process is likely to explore all the \emph{large} components within a time of order $\beta_n$. Our first lemma describes how the exploration of vertices  effect the expected forward degrees of the newly explored vertices:

\begin{lemma}[Expected forward degree during exploration]\label{lem:nu-n}
Let  $\nu_n(t) = \sum_{i\notin V_{\floor{t\beta_n}}} \ww_i^2/\sum_{i\notin V_{\floor{t\beta_n}}} \ww_i $. Then there exists a $t>0$ (sufficiently large) such that $\nu_n(t)<1/2$ with high probability. 
\end{lemma}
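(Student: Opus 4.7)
The plan is to factor out $\percn$ from $\ww_i=\percn w_i$ and write, with $l=\floor{t\beta_n}$,
\begin{eq}
\nu_n(t) = \percn \cdot \frac{\sum_{i\notin V_l} w_i^2}{\sum_{i\notin V_l} w_i} =:\percn\cdot \frac{X_n(t)}{Y_n(t)}.
\end{eq}
I will then show that $Y_n(t)/(n\mu)\pto 1$ and $\percn X_n(t)/(n\mu) \pto C/t^{3-\tau}$ for an explicit constant $C=C(\tau,\cf,\mu)$, so that $\nu_n(t)\pto C/t^{3-\tau}$. Choosing $t$ large enough that $C/t^{3-\tau}<1/2$ then completes the proof.

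For the expectations I use $\PR(i\notin V_l)=(1-w_i/\ell_n)^l$ from \eqref{eq:prob-i-in-V}. For the denominator, Lemma~\ref{lem:laplace} immediately gives $\sum_i w_i[1-(1-w_i/\ell_n)^l] = o(\ell_n)$, hence $\E[Y_n(t)]\asymp \ell_n\asymp n\mu$. The new ingredient is the analogous Laplace asymptotic for $\E[X_n(t)]=\sum_i w_i^2(1-w_i/\ell_n)^l$: using \eqref{eq:weight-assumption}, the sum is approximately $n\cf^2\int_0^1 u^{-2\alpha}\e^{-au^{-\alpha}}\dif u$ with $a=t\cf\percn^{1/(3-\tau)}/\mu\to 0$, and the substitution $z=au^{-\alpha}$ (mirroring the proof of Lemma~\ref{lem:laplace}) yields
\begin{eq}
\E[X_n(t)] \asymp \frac{n\cf^{\tau-1}\mu^{3-\tau}\Gamma(3-\tau)}{\alpha\,t^{3-\tau}}\,\percn^{-1},
\end{eq}
so that $\percn\E[X_n(t)]/\E[Y_n(t)] \to C/t^{3-\tau}$ with $C=\cf^{\tau-1}\mu^{2-\tau}\Gamma(3-\tau)/\alpha$.

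For concentration I exploit the negative correlation \eqref{negative-correlation}, which gives $\Var(\sum_i a_i \ind{i\notin V_l})\leq \sum_i a_i^2\PR(i\notin V_l)$. Applied with $a_i=w_i$ this produces $\Var(Y_n(t))/\E[Y_n(t)]^2 = O((n\percn t^{3-\tau})^{-1})\to 0$, since \eqref{perc-prob} forces $n\percn\to\infty$. Applied with $a_i=w_i^2$, and combined with the analogous Laplace estimate for $\sum w_i^4(1-w_i/\ell_n)^l$ (which equals $\asymp n\cf^4 a^{\tau-5}\Gamma(5-\tau)/\alpha$ by the same substitution), it yields $\Var(X_n(t))/\E[X_n(t)]^2 = O(t^{1-\tau}\lambda_n^{-(\tau-1)/(3-\tau)})\to 0$, since $\lambda_n\to\infty$. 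Chebyshev then gives the claimed convergence in probability. The principal technical obstacle is the Laplace-type asymptotic for $\sum w_i^2(1-w_i/\ell_n)^l$: the raw sum $\sum w_i^2$ diverges as $n^{2\alpha}$ in the scale-free regime, so the delicate balance between the power-law weight tail and the exponential cutoff must be tracked carefully; once that is done, the Gamma function $\Gamma(3-\tau)$ emerges exactly as in Lemma~\ref{lem:laplace}, and the variance computation for $X_n(t)$ reduces to applying the same substitution with $k=4$.
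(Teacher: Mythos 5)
Your proof is correct and follows essentially the same approach as the paper: you compute $\E[\nu_n(t)]$ via the same Laplace-type substitution $z = a u^{-\alpha}$ that drives Lemma~\ref{lem:laplace}, arriving at the identical constant $\cf^{\tau-1}\mu^{2-\tau}\Gamma(3-\tau)/\alpha$ in front of $t^{-(3-\tau)}$, and you then use the negative correlation of $(\ind{i\notin V_l})_{i}$ from \eqref{negative-correlation} to bound the variance and apply Chebyshev, just as in \eqref{eq:var-bound}--\eqref{expt-dif-nu}. The only (cosmetic) difference is in the denominator $\sum_{i\notin V_l}\ww_i$: you control it by a variance bound using $n\percn\to\infty$, whereas the paper replaces it by the deterministic $\bell_n(1+\oP(1))$ using uniform integrability of the weights together with $|V_{\floor{t\beta_n}}|=o(n)$; both arguments are valid here.
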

\begin{proof}
Let $W_n$ denote the weight of a vertex chosen uniformly at random. Then $W_n\xrightarrow{\sss d} W$ by Assumption~\ref{assumption-NR}, where $W$ is distributed as $F$. By \eqref{ell-n}, it follows that $(W_n)_{n\geq 1}$ is uniformly integrable. 
Since $|V_{\floor{t\beta_n}}| \leq t\beta_n = o(n)$, we conclude that $\sum_{i\notin V_{\floor{t\beta_n}}} \ww_i = \bell_n(1+\oP(1))$ for any $t>0$. 
Thus, it suffices to show that for a large enough $t$,   
\begin{eq}\label{nu-n-simpler-a}
\nu_n'(t) <\frac{1}{3} \quad\text{with high probability,}
\end{eq}
where $\nu_n'(t):=\frac{1}{\bell_n}\sum_{i\notin V_{\floor{t\beta_n}}} \ww_i^2$. 
Let us first bound $\E[\nu_n'(t)]$. Indeed, using $1-x \leq \e^{-x}$ for all $x>0$,  
\begin{eq}
\E[\nu_n'(t)] &= \frac{1}{\bell_n} \sum_{i\in [n]} \ww_i^2 \PR(i\notin V_{\floor{t\beta_n}} ) = \frac{\percn}{\ell_n} \sum_{i\in [n]} w_i^2 \Big(1-\frac{w_i}{\ell_n}\Big)^{t\beta_n} \leq \frac{\percn}{\ell_n} \sum_{i\in [n]} w_i^2 \e^{-\frac{w_i}{\ell_n}{t\beta_n}} 
\\
&\asymp \frac{\percn}{\mu} \times \frac{1}{n}   \sum_{i\in [n]} \cf^2 \Big(\frac{n}{i}\Big)^{2\alpha}  \e^{- \frac{\cf}{\mu n}(\frac{n}{i})^\alpha{t\beta_n}}\\
& \asymp \frac{\cf^2 \percn}{\mu}
    \int_0^1 x^{-2\alpha}\e^{-(\cf/\mu) x^{-\alpha}  t_n }\dif x.
\end{eq}
Denote $z=(\cf/\mu) x^{-\alpha}  t_n$, so that $\dif x=- (\cf t_n/\mu )^{1/\alpha} z^{-1-1/\alpha}/\alpha \times \dif z.$ Then, by this change of variables,
\begin{eq}\label{expt-nu-n-t}
\E[\nu_n'(t)] 
    &\leq  \frac{1}{\alpha}\mu^{1-1/\alpha} \cf^{1/\alpha}  \percn t_n^{-2+1/\alpha}
    \int_{\cf t_n/\mu}^{\infty}  z^{1-1/\alpha}\e^{-z}\dif z \\
    & \asymp \frac{1}{\alpha}\mu^{1-1/\alpha} \cf^{1/\alpha} \Gamma(3-\tau) \times t^{-(3-\tau)}.
\end{eq}
Thus, we can choose $t$ large enough so that $\E[\nu_n'(t)] <1/4$. Next we bound the variance of~$\nu_n'(t)$. 
To that end, we compute 
\begin{eq}
&\Var(\nu_n'(t) ) \\
&= \frac{1}{\bell_n^2}\bigg[ \sum_{i\in [n]} \ww_i^4 \Var ( \mathbbm{1}_{\{i\notin V_{\floor{t\beta_n}}\}}) + \sum_{i,j\in [n]\colon i\neq j} \ww_i^2\ww_j^2 \Cov(\mathbbm{1}_{\{i\notin V_{\floor{t\beta_n}}\}}, \mathbbm{1}_{\{j\notin V_{\floor{t\beta_n}}\}}\bigg].  
\end{eq}
Now, for $i\neq j$, we can again use the negative correlation between  $\mathbbm{1}_{\{i\notin V_{\floor{t\beta_n}}\}}$ and $\mathbbm{1}_{\{j\notin V_{\floor{t\beta_n}}\}}$ from \eqref{negative-correlation} to conclude that 
\begin{eq}\label{eq:var-bound}
\Var(\nu_n'(t) ) &\leq  \frac{1}{\bell_n^2} \sum_{i\in [n]} \ww_i^4 \Var ( \mathbbm{1}_{\{i\notin V_{\floor{t\beta_n}}\}}) \leq \frac{\percn^2}{\ell_n^2} \sum_{i\in [n]} w_i^4\bigg(1-\frac{w_i}{\ell_n}\bigg)^{t \beta_n} .
\end{eq}
Repeating the computation in \eqref{expt-nu-n-t}, we get, with $t_n=t\percn^{1/(3-\tau)}$,
\begin{eq}
\frac{1}{\ell_n}\sum_{i\in [n]} w_i^4\bigg(1-\frac{w_i}{\ell_n}\bigg)^{t \beta_n} \leq C t_n^{-4+1/\alpha}. 
\end{eq}
Thus, for any $t>0$, 
\begin{eq}\label{expt-dif-nu}
\Var(\nu_n'(t) ) &\leq C \frac{\percn^2}{\ell_n} (t \percn ^{1/(3-\tau)}) ^{-5+\tau} \leq \frac{Ct^{-5+\tau}}{\ell_n} \percn^{-(\tau-1)/(3-\tau)} \\
&\leq (1+o(1)) \frac{1}{\mu \lambda_n^{(\tau-1)/(3-\tau)}} \to 0. 
\end{eq}
Using Chebyshev's inequality, \eqref{nu-n-simpler-a} follows and thus proof of Lemma \ref{lem:nu-n} is complete. 
\end{proof}
We are now ready to show that all the large components are explored early during the exploration process. 
Let $\cG_n(t)$ be the sub-graph of $\rNR(\bar{\bld{w}})$ obtained by removing the vertices in $V_{\lfloor t\beta_n \rfloor}$. Then, conditionally on $V_{\lfloor t\beta_n \rfloor}$, $\cG_n(t)$ is again distributed as a Norros-Reittu model. Let $\sCi^{t}$ denote the $i$-th largest connected component in $\cG_n(t)$. The next proposition shows that $\sC_{\sss (1)}^{t}$ is microscopic when $t$ is large:

\begin{proposition}[Small components left afterwards]\label{prop:large-early}
There exists a large enough $t>0$ such that for any $\varepsilon>0$ 
\begin{eq}
\lim_{n\to\infty} \PR(|\sC_{\sss (1)}^{t}| > \varepsilon \beta_n ) = 0. 
\end{eq}
\end{proposition}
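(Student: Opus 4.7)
The plan is to show that, once $t$ is chosen large enough, the unexplored subgraph $\cG_n(t)$ is effectively a subcritical inhomogeneous random graph, from which every component automatically has size much smaller than $\beta_n$. The starting observation is that, conditionally on $V:=V_{\lfloor t\beta_n\rfloor}$ and all the edges discovered by Algorithm~\ref{algo:exploration} (all of which are incident to $V$), the edges strictly within $U:=[n]\setminus V$ have never been queried. Since in the Norros--Reittu construction the edge multiplicities $X_{ij}\sim\mathrm{Poisson}(\bar w_i\bar w_j/\bar\ell_n)$ are independent across pairs, $\cG_n(t)$ is therefore distributed as the Norros--Reittu multi-graph on $U$ with weights $(\bar w_i)_{i\in U}$ and unchanged normalisation $\bar\ell_n$. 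I would then fix $t$ large enough so that Lemma~\ref{lem:nu-n} guarantees $\nu_n(t)<1/2$ with high probability and work on this event throughout.

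Next, I would dominate each component $\sC(v)$, $v\in U$, by a two-type Poisson Galton--Watson tree: the root $v$ produces $\mathrm{Poisson}(\bar w_v)$ children, while each subsequently discovered vertex, selected proportionally to its weight by the Norros--Reittu size-biasing, produces on average at most
\[
\frac{\sum_{u\in U}\bar w_u^2}{\bar\ell_n}\leq \nu_n(t)<\frac12
\]
many further children. Because the size-biased offspring mean is bounded away from $1$, a standard Cram\'er-type estimate on the total progeny $T_v$, obtained by optimising Markov's inequality against the explicit moment generating function of the subcritical Borel--Tanner distribution, yields
\[
\PR(|\sC(v)|\geq K)\leq \PR(T_v\geq K)\leq C_1\exp\bigl(-C_2 K+C_3\bar w_v\bigr),
\]
with constants $C_1,C_2,C_3>0$ depending only on $\nu_n(t)$.

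Finally I would combine this with a deterministic bound on $\bar w_{\max}:=\max_{i\in U}\bar w_i$. The inequality $\nu_n(t)<\tfrac12$ gives $\bar w_{\max}^2\leq\sum_{i\in U}\bar w_i^2\leq\nu_n(t)\,\bar\ell_n=O(\percn n)$; comparing this with $\beta_n=n\percn^{1/(3-\tau)}$ and using \eqref{perc-prob} shows that $\bar w_{\max}/\beta_n=O\bigl(\lambda_n^{1/2-1/(3-\tau)}\bigr)\to 0$, so $\bar w_{\max}=o(\beta_n)$. Hence, for any fixed $\varepsilon>0$ and $K=\varepsilon\beta_n$, the additive term $C_3\bar w_v$ is eventually absorbed by $(C_2/2)K$, giving $\PR(|\sC(v)|\geq\varepsilon\beta_n)\leq C_1\exp(-(C_2/2)\varepsilon\beta_n)$ uniformly in $v\in U$. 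A union bound over at most $n$ starting vertices then yields
\[
\PR(|\sC_{\sss (1)}^{t}|\geq\varepsilon\beta_n)\leq n\,C_1\exp\bigl(-(C_2/2)\varepsilon\beta_n\bigr)\to 0,
\]
since $\beta_n$ grows as a positive power of $n$ under \eqref{perc-prob}. The main obstacle I anticipate is producing a branching-process tail bound that is uniform in the starting weight $\bar w_v$, which may a priori be very large; this is precisely why the deterministic estimate $\bar w_{\max}=o(\beta_n)$ coming from $\nu_n(t)<1/2$ is essential to absorb the linear-in-$\bar w_v$ correction in the exponent.
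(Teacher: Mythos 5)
Your conditioning-and-domination setup is correct and matches the paper's: you identify the conditional law of $\cG_n(t)$, invoke Lemma~\ref{lem:nu-n} to fix $t$ with $\nu_n(t)<1/2$ whp, and dominate each component by a (multi-type) Poisson Galton--Watson tree. The gap is in the large-deviation step.

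The tail bound you write,
\[
\PR\big(|\sC(v)|\geq K\big)\leq C_1\exp\big(-C_2K+C_3\bar w_v\big),
\]
with constants ``depending only on $\nu_n(t)$'', is not available here. The offspring distribution of a non-root vertex is \emph{mixed} Poisson: first a mark $I$ is drawn from the size-biased weight distribution on $U=[n]\setminus V_{\lfloor t\beta_n\rfloor}$, and then $\mathrm{Poisson}(\bar w_I)$ children are produced. The size-biased weight $\bar w_I$ is power-law with exponent $\tau-2\in(0,1)$, truncated at $\bar w_{\max}=\max_{i\in U}\bar w_i$, so the offspring moment generating function $\theta\mapsto\E[\exp(\bar w_I(\e^\theta-1))]$ blows up at a rate governed by $\bar w_{\max}$, which diverges with $n$. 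The ``Borel--Tanner'' machinery is for fixed-parameter Poisson offspring and does not apply; in the present heavy-tailed setting, the constant $C_2$ in a Chernoff bound must shrink like $1/\bar w_{\max}$. The best uniform estimate one can extract is of the form $\PR(|\sC(v)|\geq K)\leq C\exp(-c K/\bar w_{\max})$. With $K=\varepsilon\beta_n$, the exponent is $c\varepsilon\beta_n/\bar w_{\max}$, which (working out the powers under \eqref{perc-prob} and the bound $\bar w_{\max}^2\leq\nu_n(t)\bar\ell_n$) is only of order $\lambda_n^{(\tau-1)/(2(3-\tau))}$ --- a power of $\lambda_n$, not of $n$. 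Since the theorem only assumes $\lambda_n\to\infty$ with no rate, this quantity can be $o(\log n)$, and the union bound over the $\Theta(n)$ vertices in $U$ then fails: $n\exp(-c\varepsilon\beta_n/\bar w_{\max})$ need not tend to $0$.

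The paper avoids large deviations entirely. It uses a first-moment estimate on the sum of squared component sizes: from $[\text{BHL12, Lemma 2.3(c)}]$ one gets $\tE\big[|\sC^t(v)|\ind{|\sC^t(v)|\geq 2}\big]\leq 3\bar w_v$, so $\tE\big[\sum_i|\sC_{\sss(i)}^t|^2\ind{|\sC_{\sss(i)}^t|\geq 2}\big]\leq 3\bar\ell_n=O(n\percn)=o(\beta_n^2)$, and then Markov's inequality (applied to $\sum_i|\sC_{\sss(i)}^t|^2\geq|\sC_{\sss(1)}^t|^2$) yields $\PR(|\sC_{\sss(1)}^t|>\varepsilon\beta_n)\to 0$ directly, with no union bound and no dependence on the rate of $\lambda_n$. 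I'd recommend replacing your Cram\'er estimate with this Markov argument: it is both shorter and robust to the heavy tails that make your route break down.
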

\begin{proof} 
By Lemma~\ref{lem:nu-n}, we can choose a large enough $t$ such that $\nu_n(t) < \tfrac{1}{2}$ with high probability.
Let $\tPR$ and $\tE$ denote the conditional probability and expectation respectively conditionally on  $V_{\lfloor t\beta_n \rfloor}$.
For this choice of $t$, it suffices to show that 
\begin{eq}\label{eq:sum-square}
\tE\bigg[\sum_{i=1}^\infty |\sCi^{t}|^2 \mathbbm{1}_{\{|\sCi^{t}| \geq 2\}} \bigg] =o(\beta_n^2), 
\end{eq}almost surely. 
We first compute $|\sC^{t} (v)|$ for all $v\in V_{\lfloor t\beta_n \rfloor}^c$ that are not isolated in $\cG_n(t)$, i.e., $|\sC^{t} (v)| \geq 2$. 
Indeed, let $\cT_n(v,t)$ be the multi-type branching process where we start with vertex $v\in V_{\lfloor t\beta_n \rfloor}^c$, and at each generation, a vertex with mark $i$ ($i\in V_{\lfloor t\beta_n \rfloor}^c$) produces a $\mathrm{Poisson}(\ww_i)$ many offspring and each of these off-springs are assigned mark $j$ ($j\in V_{\lfloor t\beta_n \rfloor}^c$) with probability $w_j/\sum_{k\in V_{\lfloor t\beta_n \rfloor}^c} w_k$. Then the criticality parameter of this branching process is $\nu_n(t)<\tfrac{1}{2}$. 
Thus, by \cite[Lemma 2.3 (c)]{BHL12}, 
\begin{eq}
\tE[|\cT_n(v,t)|] \leq 1+ \frac{\ww_v}{1-\nu_n(t)},
\end{eq}almost surely. 
Next, note that if we discard the repeated marks in $\cT_n(v,t)$ as in Algorithm~\ref{algo:exploration}, then we get the breadth-first exploration tree of $\sC^{t} (v)$.
This gives a coupling between $\sC^{t} (v)$ and $\cT_n(v,t)$. Under this coupling, $|\sC^{t} (v)| \leq |\cT_n(v,t)|$ and the event $\{\cT_n(v,t) <2 \text{ and } |\sC^{t} (v)| \geq 2\} $ has probability zero. 
Therefore, 
\begin{eq}
\tE\Big[|\sC^{t} (v)| \mathbbm{1}_{\{|\sC^{t} (v)| \geq 2\}}\Big]&\leq \tE\Big[|\cT_n(v,t) | \mathbbm{1}_{\{|\cT_n(v,t) | \geq 2\}}\Big]\\
& = \tE[|\cT_n(v,t) | ] - \tE\Big[|\cT_n(v,t) | \mathbbm{1}_{\{|\cT_n(v,t) | =1\}}\Big]\\
& \leq 1+ \frac{\ww_v}{1-\nu_n(t)} - \tPR(|\cT_n(v,t) | =1) \\
& =\frac{\ww_v}{1-\nu_n(t)}+\tPR(|\cT_n(v,t) |\geq 2) \\
& \leq \ww_v+ \frac{\ww_v}{1-\nu_n(t)} \leq 3\ww_v,
\end{eq}
where in the one-but-last step we have used $\PR(|\cT_n(v,t) | \geq 2) = 1-\e^{-\ww_v} \leq \ww_v$. 
Therefore, 
\begin{eq}
\tE\bigg[\sum_{i=1}^\infty |\sCi^{t}|^2 \mathbbm{1}_{\{|\sCi^{t}| \geq 2\}}\bigg] =  \tE\bigg[\sum_{v \notin V_{\lfloor t\beta_n \rfloor} } |\sC^{t} (v)| \mathbbm{1}_{\{|\sC^{t} (v)| \geq 2\}} \bigg] \leq 3 \sum_{v \notin V_{\lfloor t\beta_n \rfloor} } \ww_v = O(n\percn) = o(\beta_n^2), 
\end{eq}
since $n\percn/\beta_n^2 = \percn ^{-(\tau-1)/(3-\tau)} n^{-1} = \lambda_n^{-(\tau-1)/(3-\tau)} = o(1)$. Thus, \eqref{eq:sum-square} follows and the proof of Proposition~\ref{prop:large-early} is complete.
\end{proof}

\subsection{Proof of Theorem~\ref{thm:asymp-multi}} \label{sec:proof-supercrit-NR}
With the above ingredients, we can complete the proof of Theorem~\ref{thm:asymp-multi}.
Let $\zeta = \mu \kappa^{1/(3-\tau)}$ be such that $z(t) = \kappa \mu^{3-\tau}t^{\tau-2} - t = 0$. 
Fix $\delta>0$ sufficiently small. 
Using Theorem~\ref{thm:conv-expl-proc}, for any fixed $T>0$ and  with high probability 
\begin{eq} \label{eq:delta-close}
\sup_{t \in [0, T]} \big|\bar{Z_n} (t) - z(t) \big| \leq \delta. 
\end{eq}
Then, there exists $\varepsilon = \varepsilon(\delta)>0$ ($\varepsilon\to 0$ as $\delta \to 0$) such that with high probability 
\begin{eq}
Z_n(l)>0 \geq \min_{u\leq l} Z_n(u) \quad \text{ for all }\quad l\in [\varepsilon \beta_n , (\zeta - \varepsilon) \beta_n]. 
\end{eq}
Therefore, the exploration process keeps on exploring a single component throughout the time interval $[\varepsilon \beta_n , (\zeta - \varepsilon) \beta_n]$.
Let $\sC'$ be the component that is being explored in the time interval $[\varepsilon \beta_n , (\zeta - \varepsilon) \beta_n]$. 
Recall that the excursion length of $\bld{Z}_n$ denotes the number of {\em potential} vertices in a component, which is approximately the component size due to Proposition~\ref{prop:real-vertices}.
Thus, Proposition~\ref{prop:real-vertices} and the above argument together  show that $|\sC'| \geq (\zeta - 2\varepsilon) \beta_n$ with high probability. 

Next, there exists 
$\varepsilon' = \varepsilon'(\delta)>0$ (where $\varepsilon'\to 0$ as $\delta \to 0$) such that $z(t+\varepsilon')<-3\delta$. 
By \eqref{eq:delta-close}, there exists $l\in [(\zeta - \varepsilon) \beta_n, (\zeta + \varepsilon') \beta_n]$ such that $Z_n(l) < \min_{j\leq l} Z_n(j)$, and hence a new component is finished exploring in $[(\zeta - \varepsilon) \beta_n, (\zeta + \varepsilon') \beta_n]$. Therefore, $\sC'$ is finished exploring before $(\zeta + \varepsilon') \beta_n$ and thus $|\sC'| \leq (\zeta + \varepsilon') \beta_n$. Taking $\delta>0$ to be arbitrarily small, we conclude that 
\begin{eq}
\beta_n^{-1}|\sC'| \pto \zeta. 
\end{eq}
In order to prove that $\sC' = \sC_{\sss (1)} (\percn)$, it remains to show that all other components have size $\oP(\beta_n)$. Fix $t$ large enough such that Proposition~\ref{prop:large-early} holds. Since $\bld{z}$ has only one excursion of positive length, all the components explored {\em before} time $t\beta_n$ that are different from $\sC'$ must have size $\oP(\beta_n)$. 
Further, Proposition~\ref{prop:large-early} shows that all the components explored {\em after} time $t\beta_n$ also have size $\oP(\beta_n)$. 
Thus, $\sC' = \sC_{\sss (1)} (\percn)$ and $|\sC_{\sss (2)} (\percn)| = \oP(\beta_n)$, and the proof of Theorem~\ref{thm:asymp-multi} is complete. \hfill$\blacksquare$

\section{Super-critical phase for single-edge setting}
\label{sec-pf-thm2}
In this section, we will prove Theorem~\ref{thm:asymp-single} regarding percolation on the single-edge model $\rsNR(\bw,\percn)$.
For this, we assume that 
\begin{eq}\label{percn-supercrit-single}
\percn = \lambda_n n^{-\seta} \quad \text{ with } \lambda_n \to \infty \text{ and } \lambda_n = o(n^{\seta}). 
\end{eq}
Note that an edge is kept in $\rsNR(\bw,\percn)$ between vertex $i$ and $j$ independently with probability
	\begin{equation}
	\label{eq:p-ij-NR-defn-2}
	\percn p_{ij} =\percn \big(1-\e^{-w_iw_j/\ell_n}\big).
	\end{equation}
Our strategy to prove \eqref{super-critical-equivalent} will be to obtain an asymptotic lower bound on the giant-size $|\sC_{\sss (1)}^*(\percn)|$ of $\rsNR(\bw,\percn)$ that matches with Theorem~\ref{thm:asymp-multi}.

First, we will show that the order of the giant component will be equal to the number of neighbors of a set of size $\Theta(N_n)$, where $N_n$ is such that $1/N_n$ is of the same order as $\percn w_{N_n}^2/\ell_n$. We present an overview of this argument in Section \ref{sec-overview-core}, in which we also explain how the remainder of the proof is organised.

\subsection{Formation of a giant core inside hubs: overview}
\label{sec-overview-core}
The order of the giant component will be the neighbors of a set of size $aN_n$, where $N_n$ satisfies that $1/N_n$ is of the same order as $w_{N_n}^2/\ell_n$. For such values of $N_n$ and all $u,v>0$,
    \eqn{
    \label{pij-intuition-Nn}
    N_np_{\lceil uN_n\rceil,\lceil uN_n\rceil}
    =N_n[1-\e^{-w_{\lceil uN_n\rceil}w_{\lceil uN_n\rceil}/\ell_n}]\rightarrow \frac{\cf^2}{\mu} (uv)^{-\alpha},
    }
so that $\rsNR(\bw,\percn)$ restricted to $[N_n]$ is a rank-one inhomogeneous random graph in the spirit of \cite{BJR07}.
Working this relation out yields that 
    \eqn{
    \label{Nn-supercrit}
    N_n=\lambda_n^{(\tau-1)/(3-\tau)} n^{(3-\tau)/2}.
    }
We will later use that \eqref{Nn-supercrit} is equivalent to
    \eqn{
    \label{Nn-supercrit-rew}
    \pi_n=\Big(\frac{N_n}{n}\Big)^{(3-\tau)/(\tau-1)}=\Big(\frac{n}{N_n}\Big)^{1-2\alpha},
    \quad \text{or}
    \quad
    N_n=n \percn^{(\tau-1)/(3-\tau)}.
    }
We now proceed to set up the main conceptual ingredients for the identification of the giant.
Fix a parameter $a>0$, and define 
	\eqn{
	\label{Nna-prime-def}
	N_n(a)=\lfloor a N_n \rfloor.
	}
Note that,  for $i\in \lceil N_n u \rceil $	and $u \in (0,a]$,
\begin{eq}\label{eq:asymp-w-N}
	w_{\lceil N_n u \rceil}
	=
	\cf u^{-\alpha}\Big(\frac{n}{N_n}\Big)^{\alpha}
	=\cf u^{-\alpha}\big(\lambda_n^{-(\tau-1)/(3-\tau)}n^{(\tau-1)/2}\big)^{\alpha} \asymp \sqrt{n} \lambda_n^{-1/(3-\tau)} \cf u^{-\alpha},
\end{eq}
and thus  $[N_n(a)]$ consists of vertices with weight at least of order $\sqrt{n}\lambda_n^{-1/(3-\tau)} a^{-\alpha}$. 
Consider the subgraph of $\rsNR(\bw,\percn)$ induced on $[N_n(a)]$. 
We denote this subgraph on $[N_n(a)]$ by $\cG_{\sss N_n(a)}$. 

The expression in \eqref{pij-intuition-Nn} suggests that $\cG_{\sss N_n(a)}$ is distributed approximately as a rank-1 inhomogeneous random graph. 
Also, $\cG_{\sss N_n(a)}$ is 
{\em sparse} in the sense that the number of edges grows linearly in the number of vertices in the graph.
Thus, the emergence of the giant component within $\cG_{\sss N_n(a)}$ can be studied using the general setting of inhomogeneous random graphs developed by Bollob\'as, Janson and Riordan in \cite{BJR07}. 
In particular, the results of 
\cite{BJR07} give that, for the choices in \eqref{Nn-supercrit}~and~ \eqref{Nn-supercrit-rew} 
and by \eqref{pij-intuition-Nn},
a unique and highly-concentrated giant exists inside $[N_n(a)]$ for {\em every} $a>0$.

In Section~\ref{sec:size-core-giant}, we make the connection with the key results from \cite{BJR07} explicit and state the relevant results for our proof. While the giant exists for every $a>0$, it does grow when $a$ grows, and thus we will need to take the limit as $a\rightarrow \infty$. 
The rest of Section~\ref{sec-pf-thm2} is devoted to the analysis of the limiting quantities as $a\to \infty$. 
In Section \ref{sec-lower-bound-single-edge}, we prove a {\em lower bound} on $|\sC_{\sss (1)}^*|$ by analyzing the size of the one-neighborhood of the giant of $\cG_{\sss N_n(a)}$. 
This lower bound from the single-edge Norros-Reittu model turns out to be {\em equal} to the asymptotic size of the giant in the {\em multi-edge} Norros-Reittu model as identified in Theorem \ref{thm:asymp-multi}. 
This shows that the two giants from the multi- and single-edge settings have asymptotically equal sizes in the regime given by \eqref{percn-supercrit-single}.
In Section~\ref{app-barely-supercritical-regime}, we collect all these ingredients to complete the proof of  Theorem \ref{thm:asymp-single}.

\subsection{Size and weight of the giant core}
\label{sec:size-core-giant}
Consider the measure space $\mathcal{S}_a = ((0,a], \sB((0,a]), \Lambda_a)$, where $\sB((0,a])$ denotes the Borel sigma-algebra on $(0,a]$, and $\Lambda_a(\dif x) = \frac{\dif x}{a}$ is the normalized Lebesgue measure on $(0,a]$.
Recall from~\eqref{eq:p-ij-NR-defn} that the probability that there is an edge between $i$ and $j$ after percolation equals 
	$p_{ij} =\percn [1-\e^{-w_{i}w_{j}/\ell_n}].$
For $u,v\in(0,a]$, define the kernel 
	\eqn{
	\label{kappa-Nn-def}
	\kappa_{\sss N_n}^{\sss (a)}(u,v)=N_n(a)  p_{\lceil N_n u \rceil \lceil N_n v \rceil}.
	}
Then putting $u_i^n = i/N_n$, we have that for all $i\in [N_n(a)]$, $p_{ij} =  \kappa_{\sss N_n}^{\sss (a)} (u_i^n,u_j^n)/N_n(a) $. 
Obviously, the empirical measure $\Lambda_{n,a}$ of $(u_i^n)_{i\in [N_n(a)]}$ converges in the weak-topology, with the limiting measure $\Lambda_a$.
This verifies \cite[(2.2)]{BJR07}, and thus  $(\cS_a, ((u_i^n)_{i\in [N_n(a)]})_{n\geq 1})$ is a vertex space according to the definition in \cite[Section 2]{BJR07}. 
Next, we verify that $(\kappa_{\sss N_n}^{\sss (a)})_{n\geq 1}$ is a sequence of graphical kernels on  $\cS_a$
according to \cite[Definition 2.9]{BJR07}. 
By \eqref{eq:asymp-w-N}, 
$w_{\lceil N_n u \rceil}w_{\lceil N_n v \rceil}/\ell_n=o(1).$ 
Using $1-\e^{-x} = x(1+o(1))$ as $x\to 0$, we have that, for any $(u_n)_{n\geq 1}, (v_n)_{n\geq 1} \subset (0,a]$ and $u,v\in (0,a]$ with $u_n \to u$ and $v_n\to v$, 
\eqn{
	\label{kappa-a-def}
	\kappa_{\sss N_n}^{\sss (a)}(u_n,v_n)\rightarrow \kappa^{\sss(a)}(u,v):=  \frac{a \cf^2}{\mu} (uv)^{-\alpha} \quad \text{for all }u,v\in (0,a].
	}
Note that $\kappa^{\sss (a)}$ is bounded a.e., and thus the first two conditions of \cite[Definition~2.7]{BJR07} are satisfied. 
Next, note that for any $a>0$ fixed, 
	\eqan{
	\frac{1}{N_n(a)}\sum_{\substack{i,j\in [N_n(a)]\\ i<j}} \percn [1-\e^{-w_iw_j/\ell_n}]
	&\to \frac{1}{2a}  \int_0^a \int_0^a \frac{\cf^{2}}{\mu}(uv)^{-\alpha}\dif u\dif v\nn\\
	&=\frac{1}{2} \int_0^a \int_0^a  \kappa^{\sss (a)}(u,v) \Lambda_a(\dif u)\Lambda_a(\dif v)<\infty,
	}
which verifies \cite[(2.11)]{BJR07}, and thus all the conditions of \cite[Definition 2.9]{BJR07} have now been verified. 
Finally, $\kappa^{\sss (a)} >0$, so that it is irreducible according to \cite[Definition 2.10]{BJR07}. 
Hence we have verified that $\cG_{\sss N_n(a)}$ is an inhomogeneous random graph with kernels $(\kappa^a_{\sss N_n})_{n\geq 1}$ satisfying all the requisite good properties in \cite{BJR07}. Further, since $\kappa^{\sss(a)}(u,v)$ in \eqref{kappa-a-def} is of product structure, the kernel $\kappa^{\sss(a)}$ is rank-1, as discussed in more detail in \cite[Section 16.4]{BJR07}.

To describe the giant component,  define the integral operator ${\bf T}_{\kappa^{\sss(a)}}\colon L^2(\mathcal{S}_a) \mapsto L^2(\mathcal{S}_a)$ by 
	\eqn{ \label{eq:integral-operator-defn}
	({\bf T}_{\kappa^{\sss(a)}} f)(u) = \int_{0}^a \kappa^{\sss(a)}(u,v)f(v)\Lambda_a(\dif v) = \int_0^a\frac{\cf^{2}}{\mu}(uv)^{-\alpha} f(v) \dif v,
	}
and let $\|{\bf T}_{\kappa^{\sss(a)}}\|$ denote its operator norm. Then the following holds:
\begin{fact}
For all $a>0$, we have $\|{\bf T}_{\kappa^{\sss(a)}}\| = \infty$.
\end{fact}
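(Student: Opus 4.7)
The plan is to exploit the rank-one, separable structure of the kernel. Because
\[
\kappa^{\sss(a)}(u,v) = \frac{a \cf^{2}}{\mu}\, u^{-\alpha}\, v^{-\alpha},
\]
the operator collapses every input to a scalar multiple of the fixed profile $u\mapsto u^{-\alpha}$. Concretely, for any $f\in L^2(\cS_a)$,
\[
(\mathbf{T}_{\kappa^{\sss(a)}} f)(u) \;=\; \frac{\cf^{2}}{\mu}\, u^{-\alpha}\int_0^a v^{-\alpha} f(v)\,\dif v.
\]
The structural reason behind $\|\mathbf{T}_{\kappa^{\sss(a)}}\|=\infty$ is that this unique direction $u^{-\alpha}$ does not itself lie in $L^2(\Lambda_a)$: since $\tau\in(2,3)$ forces $\alpha=1/(\tau-1)\in(1/2,1)$, one has $2\alpha>1$ and hence $\int_0^a u^{-2\alpha}\,\dif u = \infty$.

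To turn this into a formal proof, I would exhibit a single test function $f_0\in L^2(\Lambda_a)$ whose image leaves $L^2(\Lambda_a)$. The most economical choice is the indicator $f_0 = \mathbbm{1}_{(a/2,a]}$. It satisfies $\|f_0\|_{L^2(\Lambda_a)}^2 = 1/2 <\infty$. Since $\alpha<1$, the constant
\[
C_0 \;:=\; \frac{\cf^{2}}{\mu}\int_{a/2}^{a} v^{-\alpha}\,\dif v \;=\; \frac{\cf^{2}}{\mu(1-\alpha)}\, a^{1-\alpha}\bigl(1-2^{\alpha-1}\bigr)
\]
is strictly positive, and therefore $(\mathbf{T}_{\kappa^{\sss(a)}} f_0)(u) = C_0\, u^{-\alpha}$. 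Its squared $L^2(\Lambda_a)$ norm is the positive multiple $C_0^2 \,a^{-1}\!\int_0^a u^{-2\alpha}\,\dif u$, which diverges by the exponent check above.

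Thus $\|\mathbf{T}_{\kappa^{\sss(a)}} f_0\|_{L^2(\Lambda_a)}/\|f_0\|_{L^2(\Lambda_a)} = \infty$, which certifies $\|\mathbf{T}_{\kappa^{\sss(a)}}\|=\infty$ regardless of which standard convention one adopts for the norm of an unbounded densely-defined operator.

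There is essentially no obstacle: everything reduces to monomial integrability on $(0,a]$ and the single inequality $2\alpha>1$. The only point worth spelling out is the interpretation of $\|\mathbf{T}_{\kappa^{\sss(a)}}\|=\infty$ as the statement that $\mathbf{T}_{\kappa^{\sss(a)}}$ does not extend to a bounded operator on $L^2(\Lambda_a)$, which the explicit $f_0$ above makes manifest; equivalently, if one prefers to work with bounded operators throughout, one can truncate the kernel to $(\varepsilon,a]^2$ and show that the (now genuine) operator norms diverge as $\varepsilon\downarrow 0$ via the same eigenfunction computation.
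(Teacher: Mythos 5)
Your proof is correct, but it takes a cleaner route than the paper does. The paper proceeds by truncating the kernel to $(\varepsilon,a]^2$, identifying $v\mapsto v^{-\alpha}$ as the Perron eigenfunction of the truncated operator with eigenvalue proportional to $\int_\varepsilon^a v^{-2\alpha}\,\dif v$, noting this diverges as $\varepsilon\downarrow 0$ because $2\alpha>1$, and then invoking a Perron--Frobenius-type monotonicity $\|{\bf T}_{\kappa^{\sss(a)}}\|\geq\|{\bf T}^{\varepsilon}_{\kappa^{\sss(a)}}\|$ to pass from the truncated operators to the full one. You bypass the truncation and the eigenvalue computation entirely: you simply feed the bounded test function $f_0=\indicwo{(a/2,a]}$ into the operator, observe the image is $C_0\, u^{-\alpha}$ with $C_0>0$ (here $\alpha<1$ is needed to ensure $C_0$ is finite and positive), and note that this image is not in $L^2(\Lambda_a)$ precisely because $2\alpha>1$. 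This is more elementary and avoids having to justify the eigenvalue claim for the truncated operator or the monotonicity comparison; the only ingredient is the same exponent inequality $2\alpha>1$ that drives the paper's argument. Your closing remark about the meaning of an infinite operator norm is well-placed, and you note that the truncation strategy is available as an alternative if one insists on working only with bounded operators, which is exactly what the paper does.
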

\begin{proof}
Fix $\varepsilon>0$ and define
\eqn{ \label{eq:integral-operator-defn-ve}
	({\bf T}^\varepsilon_{\kappa^{\sss(a)}} f)(u) = \int_{\varepsilon}^a \frac{\cf^{2}}{\mu}(uv)^{-\alpha} f(v) \dif v.
	}
Then the largest eigenvector of ${\bf T}^{\varepsilon}_{\kappa^{\sss(a)}}$ is proportional to $v^{-\alpha}$ and the largest eigenvalue is proportional to $\int_\varepsilon^a v^{-2\alpha} \dif v $. 
Since $\kappa^{\sss(a)}$ is symmetric, the operator norm is also proportional to $\int_\varepsilon^a v^{-2\alpha} \dif v $. Using the fact that $2\alpha>1$, we get  $\lim_{\varepsilon \to 0}\|{\bf T}^{\varepsilon}_{\kappa^{\sss(a)}}\| = \infty$ for every $a>0$. 
Next, by the Perron-Frobenius theorem, we have  $\|{\bf T}_{\kappa^{\sss(a)}}\| \geq \|{\bf T}^{\varepsilon}_{\kappa^{\sss(a)}}\|$ for all $\varepsilon>0$, and thus the proof follows. 
\end{proof}

Let $\sCa_{\sss (i)}$ denote the size of $i$-th largest component of the graph $\cG_{\sss N_n(a)}$. 
Throughout this section, we suppress $\percn$  in the notation.
To describe the size of the giant component,  let  $\cX_{a}(u)$ be a multi-type branching process with type space $\cS_a$, where we start from one vertex with type $u\in \cS_a$, and a particle of type $v\in \cS_a$ produces progeny in the next generation according to a Poisson process on $\cS_a$ with intensity $ \kappa^{\sss (a)}(v,x) \Lambda_a(\dif x)$.
Let $\rho_a(u)$ be the survival probability of $\cX_{a} (u)$, and $\rho_{a,{\sss \geq k}}$ denote the probability that $\cX_{a} (u)$ has at least $k$ individuals. 
Define 
\begin{eq}\label{defn:survival-prob}
\rho_a = \int_0^a \rho_a (u) \Lambda_a(\dif u) = \frac{1}{a}\int_0^a \rho_a (u) \dif u, \quad  \rho_{a,{\sss \geq k}} = \int_0^a \rho_{a,{\sss \geq k}} (u) \Lambda_a(\dif u) = \frac{1}{a}\int_0^a\rho_{a,{\sss \geq k}}(u) \dif u.  
\end{eq}
The following proposition describes the emergence of the giant component for $\cG_{\sss N_n(a)}$: 
\begin{proposition}[Giant in $\mathcal{G}_{\sss N_n(a)}$]\label{prop:giant-restricted}
Under {\rm Assumption~\ref{assumption-NR}}, for all $a>0$, $|\sC_{\sss (1)}^{a}| = N_n(a)\rho_{a} (1+\oP(1))$, 
and $|\sC_{\sss (2)}^{a}| = \oP(N_n(a))$. 
\end{proposition}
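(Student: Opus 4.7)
The plan is to deduce the proposition directly from the main theorem of Bollob\'as, Janson, and Riordan~\cite{BJR07} for inhomogeneous random graphs. All the structural input has been put in place in the preceding discussion: $\cG_{\sss N_n(a)}$ is an inhomogeneous random graph on the vertex space $(\cS_a, ((u_i^n)_{i\in[N_n(a)]})_{n\geq 1})$ with a graphical sequence of kernels $(\kappa^{\sss(a)}_{\sss N_n})_{n\geq 1}$ converging to the irreducible and integrable limit $\kappa^{\sss(a)}(u,v) = (a\cf^{2}/\mu)(uv)^{-\alpha}$. Together with the Fact established just above the proposition, showing $\|{\bf T}_{\kappa^{\sss(a)}}\| = \infty > 1$, this places us in the supercritical regime of the BJR framework.

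I would then invoke \cite[Theorem~3.1]{BJR07} to conclude that
\begin{equation*}
\frac{|\sC_{\sss (1)}^{a}|}{N_n(a)} \pto \rho_{\kappa^{\sss(a)}},
\qquad
\frac{|\sC_{\sss (2)}^{a}|}{N_n(a)} \pto 0,
\end{equation*}
where $\rho_{\kappa^{\sss(a)}}$ is the survival probability of the multi-type branching process whose offspring distribution, from a particle of type $v \in \cS_a$, is a Poisson process on $\cS_a$ with intensity $\kappa^{\sss(a)}(v,x)\Lambda_a(\dif x)$, averaged over a root type drawn from $\Lambda_a$. Inspecting the definitions of $\cX_a(u)$, $\rho_a(u)$, and $\rho_a$ in~\eqref{defn:survival-prob}, one reads off $\rho_{\kappa^{\sss(a)}} = \rho_a$, which completes the identification of the limit.

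The main technical subtlety is that $\kappa^{\sss(a)}$ is unbounded as $u,v\to 0^{+}$, corresponding to the highest-weight vertices of $[N_n(a)]$; this is precisely what makes $\|{\bf T}_{\kappa^{\sss(a)}}\| = \infty$. This does not obstruct the application of \cite[Theorem~3.1]{BJR07}, which permits integrable but unbounded graphical kernels, and integrability is guaranteed here because $\alpha = 1/(\tau-1) \in (1/2, 1)$ gives $\int_0^a u^{-\alpha}\,\dif u < \infty$. Should one prefer to reduce to the bounded setting, a monotone-convergence argument applied to the capped kernels $\kappa^{\sss(a)} \wedge K$ as $K\to\infty$ also delivers the conclusion: the BJR theorem for bounded kernels gives a giant of asymptotic density $\rho_{a,K}$ in the capped model, stochastic domination sandwiches $|\sC_{\sss (1)}^{a}|/N_n(a)$ between the capped giant densities, and the monotonicity of survival probabilities of multi-type branching processes yields $\rho_{a,K}\uparrow \rho_a$. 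I expect the bookkeeping involved in this truncation, should it be required, to be the only mildly nontrivial piece of the argument.
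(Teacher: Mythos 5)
Your proposal is correct and follows essentially the same route as the paper: verify the vertex-space and graphical-kernel conditions, note irreducibility and $\|{\bf T}_{\kappa^{\sss(a)}}\|=\infty$, and cite the BJR law of large numbers for the giant. The one small inaccuracy is the citation for $|\sC_{\sss (2)}^a|=\oP(N_n(a))$: BJR's Theorem~3.1 only controls the largest component, and the paper (correctly) appeals to \cite[Theorem~3.6]{BJR07}, which is where irreducibility is actually used to rule out a second giant.
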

\begin{proof}
The asymptotics of $|\sCa_{\sss (1)}|$  follows  directly by applying
\cite[Theorem 3.1]{BJR07}. 
The asymptotics of $|\sCa_{\sss (2)}|$  follows from \cite[Theorem 3.6]{BJR07}.
\end{proof}

\paragraph{Branching process analysis.}
We next analyse the limiting branching process on $[N_n(a)]$, and its limit as $a\rightarrow \infty$. Recall from \eqref{defn:survival-prob} that $\rho_a (u)$ is the survival probability of an individual of type $u\in[0,a]$. Note that the offspring distribution of a vertex of type $u\in[0,a]$ is Poisson with mean given by the limit of
    \eqn{
    \percn w_{\lceil uN_n\rceil} \sum_{j\in [N_n(a)]} \frac{w_j}{\ell_n}
    \rightarrow \frac{\cf^2}{\mu} u^{-\alpha} \int_0^a v^{-\alpha} \dif v
    =\frac{\cf^2}{\mu(1-\alpha)}u^{-\alpha} a^{1-\alpha}
    \equiv \barcf a^{1-\alpha} u^{-\alpha},
    }
where, by \eqref{mu-comp},
    \eqn{
    \label{bar-cf-def}
    \barcf=\frac{\cf^2}{\mu(1-\alpha)}=\cf.
    }
By the rank-1 structure of the limiting branching process, each of the $\Poi(\barcf a^{1-\alpha} u^{-\alpha})$ children of $u$ has survival probability $\rho_a^\star$ satisfying 
	\eqn{
	\label{surv-prob-comp-app}
	\bar{\rho}_{a}^{\star}=\int_0^a \frac{\cf u^{-\alpha}}{\int_0^a \cf v^{-\alpha}\dif v} [1-\e^{-\barcf a^{1-\alpha} u^{-\alpha}\rho^{\star}_{a}}]\dif u.
	}
We then get that $a^{1-\alpha}\rho_a^{\star}\rightarrow \bar{\rho}^{\star}_{\sss \infty}$, as $a\rightarrow \infty$, where $\bar{\rho}^{\star}_{\sss \infty}$ is the largest solution of
    \eqn{
 	\label{bar-rho-def}
 	\bar{\rho}^{\star}_{\sss \infty}=(1-\alpha)\int_0^{\infty} u^{-\alpha} \big[1-\e^{-\barcf u^{-\alpha}\bar{\rho}^{\star}_{\sss \infty}}\big]\dif u.
 	}
Note that, for $c>0$ and writing $z=c/u^{\alpha}$,
    \eqn{
    \int_0^{\infty} u^{-\alpha} \big[1-\e^{-c u^{-\alpha}}\big]
    \dif u
    =\frac{(\tau-1)c^{\tau-2}}{\tau-2}\Gamma(3-\tau).
    }
Thus, 
\eqref{bar-rho-def} reduces to
    \eqn{
    \bar{\rho}^{\star}_{\sss \infty}= (\barcf \bar{\rho}^{\star}_{\sss \infty})^{\tau-2}\Gamma(3-\tau)
    ,
    }
so that
 	\eqn{
 	\label{rho-star-formula}
 	\bar{\rho}^{\star}_{\sss \infty}
 	=\Gamma(3-\tau)^{1/(3-\tau)} \barcf^{(\tau-2)/(3 -\tau)}.
 	}
Thus, $\rho_a (u)$ satisfies, as $a\rightarrow \infty$,
    \eqn{
    \label{rho-a-u-comp}
    \rho_a (u)=1-\e^{-\barcf a^{1-\alpha} u^{-\alpha}
    \rho_a^{\star}}
    \rightarrow 1-\e^{-\barcf u^{-\alpha}
    \bar{\rho}^{\star}_{\sss \infty}}.
    }
\paragraph{Analysis of total weight inside the giant of the core.}

We conclude this section by providing the asymptotics of the total weight inside  $\sCa_{\sss (1)}$:

\begin{proposition}[Weight of the giant in $\cG_{\sss N_n(a)}$]\label{prop-weight-giant-core}
Under {\rm Assumption~\ref{assumption-NR}}, for any fixed $a>0$ and $\vep>0$, 
    \begin{eq}
    \label{prob-LB-1-NBH}
    \lim_{n\to\infty}\mathbb{P}\Big(\sum_{i\in \sCa_{\sss (1)} } \percn w_i \geq (\zeta_a-\vep)n\percn^{1/(3-\tau)}\Big)=1,
    \end{eq}
where $\zeta_a:=\int_0^a \cf u^{-\alpha} \rho_a(u) \dif u$,
and 
    \eqn{
    \label{zeta-a-limit}
    \lim_{a\rightarrow \infty}
    \zeta_a=\zeta=\mu \kappa^{1/(3-\tau)},
    }
where $\kappa=\cf^{\tau-2} \Gamma(3-\tau)$.
\end{proposition}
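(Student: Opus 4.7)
The plan is to split the argument into (a) a lower-bound concentration for the weighted sum at fixed $a$, matching $\zeta_a\, n\percn^{1/(3-\tau)}$, and (b) the evaluation $\zeta_a\to\zeta=\mu\kappa^{1/(3-\tau)}$ as $a\to\infty$.

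For part (a), since the summands $\percn w_i$ are nonnegative, it is enough to control the truncated sum $W_n^{\delta}:=\sum_{i\in\sCa_{\sss (1)},\,i>\delta N_n}\percn w_i$ for a fixed small $\delta\in(0,a)$ and then let $\delta\downarrow 0$. The key tool is the law of large numbers for the type distribution inside the giant of a rank-one IRG, obtained as a by-product of \cite[Theorem~9.10]{BJR07} and the verifications of Section~\ref{sec:size-core-giant}: for every bounded continuous $\psi\colon[0,a]\to\R$,
\[
\frac{1}{N_n(a)}\sum_{i\in\sCa_{\sss (1)}}\psi(i/N_n)\xrightarrow{\sss\PR}\int_0^a \psi(u)\,\rho_a(u)\,\Lambda_a(\dif u).
\]
Applied to a smooth approximation of $\psi(u)=\cf u^{-\alpha}\ind{u>\delta}$ on $[0,a]$, and combined with the expansion $\percn w_{\lceil N_n u\rceil}=(1+o(1))\cf u^{-\alpha}\percn(n/N_n)^\alpha$ that is uniform on $[\delta,a]$ by \eqref{eq:asymp-w-N}, together with the elementary identity $\percn n^{\alpha} N_n^{1-\alpha}\asymp n\percn^{1/(3-\tau)}$ stemming from \eqref{Nn-supercrit-rew}, this yields
\[
\frac{W_n^{\delta}}{n\percn^{1/(3-\tau)}}\xrightarrow{\sss\PR}\int_\delta^a \cf u^{-\alpha}\rho_a(u)\,\dif u.
\]
Since $\rho_a\leq 1$ and $\alpha<1$, the piece discarded by truncation satisfies $\int_0^\delta \cf u^{-\alpha}\rho_a(u)\,\dif u\leq \cf\delta^{1-\alpha}/(1-\alpha)$, which vanishes with $\delta$. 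Choosing $\delta$ small enough so that the right-hand side exceeds $\zeta_a-\vep$ establishes \eqref{prob-LB-1-NBH}.

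For part (b), I split $\zeta_a$ at a threshold $M>0$. On $(0,M]$, the pointwise convergence $\rho_a(u)\to\rho_{\sss\infty}(u):=1-\e^{-\barcf u^{-\alpha}\bar\rho_{\sss\infty}^\star}$ from \eqref{rho-a-u-comp}, combined with $\rho_a\leq 1$ and integrability of $u^{-\alpha}$ near $0$, gives convergence of the inner integral by dominated convergence. On $(M,a]$, boundedness of $a^{1-\alpha}\rho_a^\star$ (guaranteed by \eqref{bar-rho-def}) produces the uniform estimate $\rho_a(u)\leq Cu^{-\alpha}$, so $\int_M^a \cf u^{-\alpha}\rho_a(u)\,\dif u\leq C'M^{1-2\alpha}\to 0$ as $M\to\infty$, uniformly in $a$, since $2\alpha>1$ for $\tau<3$. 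The same bound controls the tail of $\int_0^\infty \cf u^{-\alpha}\rho_{\sss\infty}(u)\,\dif u$, and hence $\zeta_a\to\int_0^\infty\cf u^{-\alpha}\rho_{\sss\infty}(u)\,\dif u$. The substitution $z=\barcf u^{-\alpha}\bar\rho_{\sss\infty}^\star$ expresses the resulting integral through the Gamma function, and the explicit formulas \eqref{bar-cf-def}, \eqref{rho-star-formula} and \eqref{mu-comp} combine by routine algebra to yield $\zeta=\mu\kappa^{1/(3-\tau)}$.

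The main technical obstacle lies in part (a): the cited type-distribution law of large numbers must be fed a test function that is unbounded at $u=0$, reflecting the heavy-weight vertices that carry most of the mass in $\zeta_a$. The truncation at $\delta N_n$ circumvents this directly, and the uniform tail estimate $\int_0^\delta u^{-\alpha}\,\dif u=O(\delta^{1-\alpha})$ controls what is discarded, independently of $a$. A secondary point is that the BJR-style convergence must be invoked for the $n$-dependent graphical kernels $\kappa_{\sss N_n}^{\sss(a)}$ rather than only for their limit $\kappa^{\sss(a)}$, but this is built into the ``graphical sequence of kernels'' framework whose hypotheses have already been verified in Section~\ref{sec:size-core-giant}.
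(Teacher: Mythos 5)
Your argument mirrors the paper's own proof: for \eqref{prob-LB-1-NBH} both truncate the sum at $i>\delta N_n$ so that the weight function is bounded, apply \cite[Theorem~9.10]{BJR07} on the resulting inhomogeneous random graph, control the discarded piece by $\int_0^\delta u^{-\alpha}\,\dif u=O(\delta^{1-\alpha})$, and rescale using $w_{N_n}\percn N_n=\cf n\percn^{1/(3-\tau)}$; for \eqref{zeta-a-limit} both pass to the limit $a\to\infty$ in $\zeta_a=\int_0^a\cf u^{-\alpha}\rho_a(u)\,\dif u$ via the convergence $a^{1-\alpha}\rho_a^\star\to\bar\rho_\infty^\star$ and then evaluate the limiting integral by a Gamma-function substitution. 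The only cosmetic difference is that you phrase the interchange of limits in \eqref{zeta-a-limit} as an explicit dominated-convergence/tail-splitting argument and compute the final integral directly, whereas the paper writes the same estimate as a one-line $o(1)$ and routes the evaluation through the self-consistency relation \eqref{bar-rho-def} before invoking the Gamma-function identity.
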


\begin{proof}
We apply \cite[Theorem 9.10]{BJR07}. Choose $\delta>0$ sufficiently small.
We drop the contribution due to $i\leq N_n(\delta)$ to obtain
    \eqn{
    \sum_{i\in \sCa_{\sss (1)} } \percn w_i
    \geq \sum_{i\in \sCa_{\sss (1)}\cap [N_n(\delta)]} 
    \percn w_i.
    }
Further, for all $i\in [N_n(a)]\setminus [N_n(\delta)]$, the function $i\mapsto \frac{w_i}{w_{N_n}}$ is bounded. Thus,  \cite[Theorem 9.10]{BJR07} is applicable and we have 
\eqan{
\sum_{i\in \sCa_{\sss (1)}\cap [N_n(\delta)]}\percn w_i &= \big(w_{N_n}\percn N_n(a)\big)
\frac{1}{N_n(a)}\sum_{i\in \sCa_{\sss (1)}\cap [N_n(\delta)]} \frac{w_i}{w_{N_n}}\\
&=(1+\oP(1))\big(w_{N_n}\percn N_n(a)\big)\int_\delta^a u^{-\alpha} \rho_a(u) \Lambda_a(\dif u)\nn\\
&=(1+\oP(1))\big(w_{N_n}\percn N_n\big)\int_\delta^a u^{-\alpha} \rho_a(u)\dif u.\nn
}
Since
    \eqn{
    \int_0^\delta u^{-\alpha} \rho_a(u) \dif u
    \leq \int_0^\delta u^{-\alpha}\dif u\leq \vep/\cf,
    }
by choosing $\delta=\delta(\vep)>0$ sufficiently small, we obtain
    \begin{eq}
    \label{prob-LB-1-NBH-a}
    \lim_{n\to\infty}\mathbb{P}\Big(\sum_{i\in \sCa_{\sss (1)} } \percn w_i \geq (\zeta_a-\vep)w_{N_n}\percn N_n/\cf\Big)=1.
    \end{eq}
The proof of \eqref{prob-LB-1-NBH} follows by noting that \eqref{Nn-supercrit}--\eqref{eq:asymp-w-N} imply that $w_{N_n}\percn N_n=\cf n\percn^{1/(3-\tau)}$. 

We continue with \eqref{zeta-a-limit}, for which we note that
    \eqan{
    \zeta_a=\int_0^a \cf u^{-\alpha} \rho_a(u) \dif u.
 	}
We use \eqref{rho-a-u-comp} to further write this as
    \eqan{
    \zeta_a&=\int_0^a \cf u^{-\alpha}[1-\e^{-\barcf a^{1-\alpha} u^{-\alpha}
    \rho_a^{\star}}]\dif u
    =\int_0^a \cf u^{-\alpha}[1-\e^{-\barcf u^{-\alpha}
    \bar{\rho}^{\star}_{\sss \infty}}]\dif u+o(1)\\
    &\rightarrow \zeta\equiv \int_0^\infty \cf u^{-\alpha}[1-\e^{-\barcf u^{-\alpha}
    \bar{\rho}^{\star}_{\sss \infty}}]\dif u=\frac{\cf \bar{\rho}^{\star}_{\sss \infty}}{1-\alpha}
    =\Gamma(3-\tau)^{1/(3-\tau)} \cf \barcf^{(\tau-2)/(3 -\tau)}\frac{\tau-1}{\tau-2}\nn,
    }
by \eqref{bar-rho-def} and \eqref{rho-star-formula}. By \eqref{mu-comp}, this constant equals 
    \eqn{
    \zeta=\mu \kappa^{1/(3-\tau)},
    }
where $\kappa=\cf^{\tau-2} \Gamma(3-\tau)$ as claimed in \eqref{zeta-a-limit}.
\end{proof}

\subsection{Lower bound using 1-neighborhood}
\label{sec-lower-bound-single-edge}
We next aim to use Proposition \ref{prop-weight-giant-core} to identify the constant in the lower bound on the giant in the barely supercritical regime in the single-edge constrained case as studied in this paper.
The main result is the following proposition, which proves a lower bound on the largest connected component in the percolated $\mNR$:

\begin{proposition}[Size of tiny giant]
\label{prop-LB-size-span-C1}
Under {\rm Assumption~\ref{assumption-NR}}, for any $\vep>0$, as $n\to\infty$, 
    \begin{eq}
    \label{prob-LB-Cmax}
    \mathbb{P}\Big(|\sC_{\sss (1)}^*(\percn)|\geq (\zeta-\vep)n\percn^{1/(3-\tau)}\Big)=1,
    \end{eq}
where $\zeta$ is defined in \eqref{zeta-a-limit}.
\end{proposition}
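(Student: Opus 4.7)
The plan is to lower bound $|\sC_{\sss (1)}^*(\percn)|$ by the size of the direct one-neighborhood of the tiny giant $\sCa_{\sss (1)}$ from Proposition~\ref{prop:giant-restricted}. Under \eqref{percn-supercrit-single} and \eqref{Nn-supercrit} we have $|\sCa_{\sss (1)}|\leq N_n(a) = o(n\percn^{1/(3-\tau)})$, so the core itself is asymptotically negligible; the bulk of the giant must come from low-weight vertices outside $[N_n(a)]$ attached to $\sCa_{\sss (1)}$ by a single edge.

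Fix $\vep>0$. By \eqref{zeta-a-limit} choose $a=a(\vep)$ large enough that $\zeta_a\geq \zeta-\vep/3$, and let $\mathcal{A}$ denote the high-probability event from Proposition~\ref{prop-weight-giant-core} on which $S_a:=\sum_{i\in\sCa_{\sss (1)}}\percn w_i \geq (\zeta-\vep/2)\,n\percn^{1/(3-\tau)}$. Let $\tPR$ denote probability conditional on the $\sigma$-field generated by $\cG_{\sss N_n(a)}$, so that $\sCa_{\sss (1)}$ becomes deterministic. Define
\[
\cN := \{v\in [n]\setminus [N_n(a)] : v\leftrightarrow \sCa_{\sss (1)}\text{ in }\rsNR(\bw,\percn)\},\qquad A_v := \sum_{i\in\sCa_{\sss (1)}}\percn(1-\e^{-w_iw_v/\ell_n}).
\]
Edges joining distinct $v,v'\notin [N_n(a)]$ to $[N_n(a)]$ involve disjoint independent Bernoullis, so the indicators $(\mathbbm{1}_{\{v\in\cN\}})_{v\notin [N_n(a)]}$ are independent under $\tPR$, with $q_v:=\tPR(v\in\cN)\geq 1-\e^{-A_v}$.

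For the first moment, I would chain the inequalities $1-\e^{-x}\geq x-x^2/2$ to obtain $q_v\geq (w_v/\ell_n)S_a - \tfrac12 (w_v^2/\ell_n^2)T_a - \tfrac12 (w_v S_a/\ell_n)^2$, where $T_a:=\sum_{i\in\sCa_{\sss (1)}}\percn w_i^2$. Summing over $v\notin[N_n(a)]$ and using $\sum_{v\notin [N_n(a)]}w_v = (1+o(1))\ell_n$ (a consequence of $w_i=\cf(n/i)^\alpha$ with $\alpha<1$ together with $N_n=o(n)$), the leading term equals $(1+o(1))S_a$. The two correction terms are controlled via $T_a\leq \percn\sum_{i\leq N_n}w_i^2 = O(\percn n^{2\alpha})$ (since $2\alpha>1$), $\sum_{v\notin[N_n(a)]}w_v^2 = O(n^{2\alpha}N_n^{1-2\alpha})$, and the identity $\percn^{1/(3-\tau)}=(n/N_n)^\alpha$ from \eqref{Nn-supercrit-rew}; a routine substitution shows both corrections are $o(n\percn^{1/(3-\tau)})$. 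Hence on $\mathcal{A}$, $\tE[|\cN|]\geq (\zeta-2\vep/3)\,n\percn^{1/(3-\tau)}$. Conditional independence yields $\var(|\cN|\mid \cG_{\sss N_n(a)}) = \sum_v q_v(1-q_v)\leq \tE[|\cN|]$, so Chebyshev combined with $\tE[|\cN|]=\Theta(n\percn^{1/(3-\tau)})\to \infty$ gives $|\cN|\geq (\zeta-\vep)n\percn^{1/(3-\tau)}$ whp on $\mathcal{A}$. Since $\cN\cup \sCa_{\sss (1)}$ lies in a single connected component of $\rsNR(\bw,\percn)$, this proves \eqref{prob-LB-Cmax}.

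The main obstacle is the validity of the linearization $q_v\approx (w_v/\ell_n)S_a$, which is \emph{not} uniform over $v\notin[N_n(a)]$: near the boundary $v\approx N_n(a)$ one has $A_v\asymp C a^{-\alpha}=\Theta(1)$, and the Taylor remainder $A_v^2$ is not automatically small. The resolution is to introduce an auxiliary large constant $M\gg a$ and first restrict $\cN$ to $v>N_n(M)$: for such $v$, $A_v\leq CM^{-\alpha}\to 0$ as $M\to\infty$, so all quadratic remainders become arbitrarily small, while the discarded strip $[N_n(M)]\setminus[N_n(a)]$ contains only $O(N_n)=o(n\percn^{1/(3-\tau)})$ vertices and therefore costs nothing. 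The dominant contribution to $\sum_v q_v$ comes from $v$ near rank $n$ (smallest weights), where the expansion is exact in the limit and exactly reproduces the constant $\zeta=\mu\kappa^{1/(3-\tau)}$.
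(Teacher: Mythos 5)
Your overall strategy is the same as the paper's: condition on $\cG_{\sss N_n(a)}$, lower-bound $|\sC^*_{\sss(1)}(\percn)|$ by the number of direct neighbors of $\sCa_{\sss(1)}$ outside $[N_n(a)]$, compute the conditional first moment and apply Chebyshev. The paper isolates this as Lemma~\ref{prop:2nbd-giant} plus Proposition~\ref{prop-weight-giant-core}. Where you diverge is in the details of the first-moment estimate, and there a step does not hold as stated.

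The problematic claim is that the correction $\tfrac{T_a}{2\ell_n^2}\sum_{v\notin[N_n(a)]}w_v^2$ is $o(\beta_n)$ by a routine substitution. Plugging in your own bounds $T_a=O(\percn n^{2\alpha})$ and $\sum_{v\notin[N_n(a)]}w_v^2=O(n^{2\alpha}N_n^{1-2\alpha}a^{1-2\alpha})$, together with $\percn N_n^{1-2\alpha}=n^{1-2\alpha}$ (from \eqref{Nn-supercrit-rew}), gives a bound of order $a^{1-2\alpha}n^{2\alpha-1}=a^{1-2\alpha}n^{(3-\tau)/(\tau-1)}$. Comparing against $\beta_n=n^{1/2}\lambda_n^{1/(3-\tau)}$, the ratio is of order $a^{1-2\alpha}n^{(7-3\tau)/(2(\tau-1))}\lambda_n^{-1/(3-\tau)}$, which tends to $\infty$ whenever $\tau<7/3$ and $\lambda_n$ grows sub-polynomially (e.g.\ $\lambda_n=\log n$). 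So this correction is not $o(\beta_n)$ under the hypotheses of the proposition.

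Moreover, the $M$-trick you propose does not repair this. Restricting to $v>N_n(M)$ only replaces $a^{1-2\alpha}$ by $M^{1-2\alpha}$ — a smaller constant, but the $n$-dependent factor $n^{(3-\tau)/(\tau-1)}$ is unchanged and still dwarfs $\beta_n$ for $\tau<7/3$. Your obstacle paragraph diagnoses a non-uniformity in $A_v$ over $v$ (the second linearization $1-\e^{-A_v}\geq A_v-A_v^2/2$), but the term that actually breaks is the \emph{first} linearization $1-\e^{-p_{iv}}\geq p_{iv}-\tfrac12 p_{iv}^2$ applied inside $A_v$: for $i$ near rank $1$ and $v$ near $N_n(a)$, one has $w_iw_v/\ell_n\to\infty$ (indeed $w_1w_{N_n}/\ell_n\asymp n^{(3-\tau)/(2(\tau-1))}\lambda_n^{-1/(3-\tau)}$), and there the quadratic remainder $p_{iv}^2/2$ vastly overestimates the true error $p_{iv}-(1-\e^{-p_{iv}})\leq p_{iv}$. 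Bounding $T_a$ by $\percn\sum_{i\le N_n}w_i^2$, which is dominated by $i=1$, is precisely where this over-counting enters.

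The fix is to excise the heavy $i$'s, not the boundary $v$'s: for a small $c>0$ choose a cutoff $K_n=K_n(c)$ such that $w_iw_v/\ell_n\le c$ for all $i>K_n$, $v>N_n(a)$. The contribution of $i\le K_n$ to $\sum_{i,v}\percn w_iw_v/\ell_n$ is $O(\percn\sum_{i\le K_n}w_i)=O(\beta_n (K_n/N_n)^{1-\alpha})=O(\beta_n\lambda_n^{-2(\tau-2)/(3-\tau)})=o(\beta_n)$, and for $i>K_n$ the linearization is uniformly accurate up to a factor $(1-c/2)$; letting $n\to\infty$ and then $c\to0$ closes the argument. (The paper hides the same delicacy behind the phrase ``it is not hard to see'' at \eqref{eq:one-step-simple-sec}.) Your second correction $\tfrac12(w_vS_a/\ell_n)^2$ and your variance/Chebyshev step are both fine, and the conclusion that $\cN\cup\sCa_{\sss(1)}$ sits in a single component is correct — only the treatment of the $T_a$-correction needs to be replaced by the two-scale truncation above.
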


The key intuition in the proof of Proposition \ref{prop-LB-size-span-C1} is that the size of the giant connected component should be approximately equal to the size of the 1-neighborhood of $\sC_{\sss (1)}^a$, which approximately equals
    \eqn{
    (1+\oP(1))\sum_{j\in \sC_{\sss (1)}^{a}} \percn w_j,
    }
as studied in Proposition \ref{prop-weight-giant-core}. 
Recall that $\beta_n = n \percn ^{1/(3-\tau)}$. 
The following 
lemma shows that the size of the 1-neighborhood ${\mathcal N}_1(\sC_{\sss (1)}^{a})$ of $\sC_{\sss (1)}^{a}$ is closely concentrated around $\sum_{j\in \sC_{\sss (1)}^{a}} \percn w_j$:
\begin{lemma}[Direct neighbors of {$\sC_{\sss (1)}^{a}$}]
\label{prop:2nbd-giant}
For any fixed $a>0$, and $\vep >0$, as $n\to\infty$,
	\begin{eq}
	\label{eq:1-nbd-prob-convergence}
	\PR\bigg(\Big| |{\mathcal N}_1(\sC_{\sss (1)}^{a})| - \sum_{i\in \sC_{\sss (1)}^{a}} \percn w_i \Big| > \vep\beta_n \ \bigg\vert\ \cG_{\sss N_n(a)} \bigg) \pto 0. 
	\end{eq} 
\end{lemma}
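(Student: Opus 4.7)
The plan is to prove conditional concentration of $|{\mathcal N}_1(\sC_{\sss (1)}^{a})|$ around $\sum_{i\in \sC_{\sss (1)}^{a}} \percn w_i$ via first and second moment arguments, with all error terms controlled deterministically given $\cG_{\sss N_n(a)}$. Write $|{\mathcal N}_1(\sC_{\sss (1)}^{a})| = |{\mathcal N}_1(\sC_{\sss (1)}^{a})\cap [N_n(a)]| + Y$, where $Y := \sum_{v\notin [N_n(a)]} X_v$ and $X_v:=\indic{v\in {\mathcal N}_1(\sC_{\sss (1)}^{a})}$. The first summand is at most $N_n(a)$, and by \eqref{Nn-supercrit-rew} the ratio $N_n/\beta_n = \percn^{(\tau-2)/(3-\tau)}$ tends to $0$, so this inside-$[N_n(a)]$ contribution is $o(\beta_n)$. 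The key observation for $Y$ is that, conditionally on $\cG_{\sss N_n(a)}$, the family $(X_v)_{v\notin [N_n(a)]}$ consists of mutually independent Bernoullis with parameters $q_v:=1-\prod_{i\in\sC_{\sss (1)}^{a}}(1-\percn p_{iv})$: this holds because the edges of $\sNR(\bw,\percn)$ incident to any vertex outside $[N_n(a)]$ are drawn independently of each other and of $\cG_{\sss N_n(a)}$.

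Since $\E[Y\mid \cG_{\sss N_n(a)}] = \sum_v q_v \leq \sum_{i\in\sC_{\sss (1)}^{a}}\percn w_i \leq \percn \sum_{i\in[N_n(a)]} w_i = O(\beta_n)$, using the power-law asymptotic $\percn \sum_{i\in[N_n(a)]} w_i \asymp a^{1-\alpha}\beta_n$, we have $\Var(Y\mid\cG_{\sss N_n(a)}) \leq \E[Y\mid\cG_{\sss N_n(a)}] = O(\beta_n)$, and Chebyshev's inequality gives $\PR(|Y-\E[Y\mid\cG_{\sss N_n(a)}]|>\vep\beta_n/2 \mid \cG_{\sss N_n(a)}) = O(1/\beta_n) \to 0$. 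It therefore suffices to establish the deterministic estimate
\begin{equation*}
\Big|\sum_{v\notin [N_n(a)]} q_v - \sum_{i\in\sC_{\sss (1)}^{a}} \percn w_i\Big| = o(\beta_n).
\end{equation*}

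By the union bound $q_v\leq \percn s_v$ where $s_v:=\sum_{i\in\sC_{\sss (1)}^{a}} p_{iv}$, while Bonferroni together with the independence of the edges incident to $v$ gives $q_v\geq \percn s_v-\tfrac{1}{2}(\percn s_v)^2$. The Bonferroni correction is controlled via $p_{iv}\leq w_iw_v/\ell_n$, implying $s_v\leq w_v W_{\sss \rm giant}/\ell_n$ with $W_{\sss \rm giant}:=\sum_{i\in\sC_{\sss (1)}^{a}}w_i$, so that
\begin{equation*}
\sum_v (\percn s_v)^2 \leq \Big(\frac{\percn W_{\sss\rm giant}}{\ell_n}\Big)^2 \sum_{v\notin [N_n(a)]} w_v^2 = O\Big(\frac{\beta_n^2}{\percn n}\Big) = O(N_n) = o(\beta_n),
\end{equation*}
where we used $\sum_{v\notin[N_n(a)]} w_v^2\asymp n/\percn$ (a consequence of $\sum_{v\geq V}w_v^2\asymp V^{1-2\alpha}n^{2\alpha}$, valid since $2\alpha>1$, applied at $V=aN_n$) together with the identity $\beta_n^2/(\percn n) = N_n$.

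The main obstacle that remains is to show $\percn\sum_v s_v = \sum_{i\in\sC_{\sss (1)}^{a}} \percn w_i + o(\beta_n)$. Setting $t_{iv}:=w_iw_v/\ell_n$ and $h(t):=t-(1-\e^{-t})\geq 0$, one checks $\sum_{v\notin [N_n(a)]}(1-\e^{-t_{iv}})=w_i(1-o(1))-\sum_{v\notin [N_n(a)]} h(t_{iv})$, so the task reduces to proving
\begin{equation*}
\percn \sum_{i\in\sC_{\sss (1)}^{a}}\sum_{v\notin [N_n(a)]} h(t_{iv}) = o(\beta_n).
\end{equation*}
Upper bounding by the deterministic sum over $i\in[N_n(a)]$ and using $h(t)\leq\min(t^2/2,t)$, I split the inner sum at $t_{iv}=2$: the ``saturated'' contribution ($t_{iv}>2$) forces $v<v^*(i)\asymp n^{3-\tau}/i$, which is nonempty for $v\notin [N_n(a)]$ only when $i\lesssim n^{\seta}/\lambda_n^{(\tau-1)/(3-\tau)}$, whereas the ``unsaturated'' contribution ($t_{iv}\leq 2$) is handled via $\sum_{v\geq V}w_v^2\asymp V^{1-2\alpha}n^{2\alpha}$. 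Careful but elementary bookkeeping shows both regimes contribute $O(n^{3-\tau}\log n)$ to $\sum_{i\in[N_n(a)]}\sum_v h(t_{iv})$, so that $\percn\sum_i\sum_v h(t_{iv}) = O(\lambda_n n^{\seta}\log n)$, which is $o(\beta_n)$ since the ratio $\beta_n/(\lambda_n n^{\seta}) = \lambda_n^{(\tau-2)/(3-\tau)} n^{(\tau-2)/2}$ grows polynomially under the hypothesis $\lambda_n=o(n^{\seta})$. Assembling all the estimates yields the conditional concentration.
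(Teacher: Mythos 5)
Your proof is correct and follows essentially the same strategy as the paper's: condition on $\cG_{\sss N_n(a)}$, express $|\cN_1(\sC_{\sss (1)}^a)|$ as a sum of conditionally independent Bernoulli indicators over $v\notin[N_n(a)]$, control the conditional mean via first- and second-order Bonferroni, and use $\Var_1\le\E_1$ plus Chebyshev for the fluctuations. The paper's \eqref{one-neighbor-sum-indic} counts only $j\notin[N_n(a)]$, whereas you explicitly add the within-$[N_n(a)]$ contribution and discard it using $N_n(a)=O(N_n)=o(\beta_n)$; this is a cosmetic difference that only makes your version more self-contained. Your Bonferroni correction $\sum_v(\pi_n s_v)^2 = O(N_n)$ matches the paper's bound on the double sum in \eqref{eq:sim-split-1}. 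The one place where you genuinely go beyond the paper is the step the paper handles with ``it is not hard to see'' in \eqref{eq:one-step-simple-sec}: you carefully quantify the replacement $1-\e^{-t_{iv}}\mapsto t_{iv}$ via the correction $h(t)=t-(1-\e^{-t})$, split at $t_{iv}=2$ into saturated and unsaturated regimes, and show both contribute $O(n^{3-\tau}\log n)$, hence $\pi_n\sum h(t_{iv})=O(\lambda_n n^{\seta}\log n)=o(\beta_n)$. This split is in fact necessary, since $t_{1,\lceil N_n\rceil}$ can be unbounded, so the naive $h(t)\le t^2/2$ bound alone would not close. Your exponent bookkeeping (both regimes give $i^{-1}n^{3-\tau}$ per vertex $i$, hence the $\log n$ from the harmonic sum, and $\beta_n/(\lambda_n n^{\seta})=\lambda_n^{(\tau-2)/(3-\tau)}n^{(\tau-2)/2}\to\infty$) checks out.
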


\begin{proof}
Let $\PR_1$ and $\E_1$ denote the conditional probability and expectation, respectively, conditionally on $\cG_{\sss N_n(a)}$. 
Let us first show that 
	\begin{eq}\label{eq:expt-weight-main-contribution}
	\E_1\big[\big|{\mathcal N}_1(\sC_{\sss (1)}^{a})\big|\big] =   \sum_{i\in \sC_{\sss (1)}^{a}}  \percn w_i+\oP(\beta_n).
	\end{eq}
Note that 
	\eqn{\label{one-neighbor-sum-indic}
	\big|{\mathcal N}_1(\sC_{\sss (1)}^{a})\big|= \sum_{j\notin [N_n(a)]} \ind{(i,j) \text{ is present for some }i\in \sC_{\sss (1)}^{a}}.
	}
Thus, by a union bound, the expectation in \eqref{eq:expt-weight-main-contribution} is at most 
    \begin{eq}
    \label{eq:sim-split-01}
    \percn \sum_{i\in \sC_{\sss (1)}^{a}}  \sum_{j\notin [N_n(a)]} \big( 1- \e^{-w_iw_j/\ell_n}\big).
    \end{eq}
Moreover, using inclusion-exclusion,  the expectation in \eqref{eq:expt-weight-main-contribution} is at least 
	\begin{eq}\label{eq:sim-split-1}
	&\sum_{i\in \sC_{\sss (1)}^{a}}  \sum_{j\notin [N_n(a)]} \percn \big( 1- \e^{-w_iw_j/\ell_n}\big)   - \percn^2\sum_{i_1,i_2\in \sC_{\sss (1)}^{a}} \sum_{j\notin [N_n(a)]} \big( 1- \e^{- w_{i_1}w_j/\ell_n}\big)\big( 1- \e^{- w_{i_2}w_j/\ell_n}\big).
	\end{eq}
Now, 
    \eqn{
    \sum_{i\in \sC_{\sss (1)}^{a}} w_i \leq \sum_{i\in [N_n(a)]} w_i \leq C a^{1-\alpha} (N_n)^{1-\alpha} n^\alpha.
    }
Using $1-\e^{-x} \leq x$ and \eqref{Nn-supercrit-rew}, the second term in \eqref{eq:sim-split-1} is at most 
    \begin{eq}
    \percn^2 \bigg(\sum_{i\in \sC_{\sss (1)}^{a}} w_i\bigg)^2 \frac{1}{\ell_n^2} \sum_{j\notin [N_n(a)]} w_j^2&\leq Ca^{2-2\alpha}\percn^2 (N_n)^{2-2\alpha} n^{-2+4\alpha} \sum_{j>N_n(a)} j^{-2\alpha} \\
    &\leq C a^{3-4\alpha }\percn^2 n^{-2+4\alpha}(N_n)^{3-4\alpha} = CN_n=o(\beta_n).
    \end{eq}
Therefore, \eqref{eq:sim-split-01} and \eqref{eq:sim-split-1} together imply that 
    \begin{eq}
    \label{eq:one-step-simple}
    \E_1\big[\big|{\mathcal N}_1(\sC_{\sss (1)}^{a})\big|\big] =  \percn \sum_{i\in \sC_{\sss (1)}^{a}} \sum_{j\notin [N_n(a)]}\big( 1- \e^{-w_iw_j/\ell_n}\big) + \oP(\beta_n).
    \end{eq}
It is not hard to see that 
    \begin{eq}
    \label{eq:one-step-simple-sec}
    \sum_{i\in \sC_{\sss (1)}^{a}} \sum_{j\notin [N_n(a)]}\percn \big( 1- \e^{-w_iw_j/\ell_n}\big)=  \sum_{i\in \sC_{\sss (1)}^{a}} \sum_{j\notin [N_n(a)]}\frac{\percn  w_iw_j}{\ell_n} + \oP(\beta_n).
    \end{eq}
Moreover, again by \eqref{Nn-supercrit-rew},
    \eqan{
    \sum_{i\in \sC_{\sss (1)}^{a}} \sum_{j\in [N_n(a)]} \frac{\percn w_iw_j}{\ell_n}
    &\leq \frac{\percn}{\ell_n} \Big(\sum_{i\in [N_n(a)]} w_i\Big)^2\\
    &\leq \percn C a^{2(1-\alpha)} (N_n)^{2(1-\alpha)} n^{2\alpha-1} = O(N_n)= o(\beta_n).\nn
    }
We conclude that
    \begin{eq}
    \label{eq:one-step-simple-third}
    \E_1\big[\big|{\mathcal N}_1(\sC_{\sss (1)}^{a})\big|\big]=  \sum_{i\in \sC_{\sss (1)}^{a}} \sum_{j\in[n]}\percn  \frac{w_iw_j}{\ell_n} + \oP(\beta_n)
    =\sum_{i\in \sC_{\sss (1)}^{a}} \percn  w_i + \oP(\beta_n),
    \end{eq}
and thus \eqref{eq:expt-weight-main-contribution} follows.

To complete the proof of \eqref{eq:1-nbd-prob-convergence}, we apply Chebyshev's inequality for which we need to bound the conditional variance of $|{\mathcal N}_1(\sC_{\sss (1)}^{a})|$. Let $\mathrm{Var}_1$ denote the variance conditionally on  $\cG_{\sss N_n(a)}$.
Note that 
\eqref{one-neighbor-sum-indic} is a sum of conditionally independent indicators, given $\cG_{\sss N_n(a)}$. 
Therefore,
	\begin{eq}
	\mathrm{Var}_1\big(\big|{\mathcal N}_1 (\sC_{\sss (1)}^{a})\big| \big) \leq \E_1\big[\big|{\mathcal N}_1 (\sC_{\sss (1)}^{a})\big|  \big], 
	\end{eq}
and an application of Chebyshev's inequality completes the proof. 
\end{proof}

Now we are ready to complete the proof of Proposition~\ref{prop-LB-size-span-C1}:

\begin{proof}[Proof of Proposition~\ref{prop-LB-size-span-C1}]
Clearly, $|\sC_{\sss (1)}^*(\percn)|\geq |{\mathcal N}_1(\sC_{\sss (1)}^{a})|$. We apply Lemma \ref{prop:2nbd-giant}, and rely on Proposition \ref{prop-weight-giant-core} to estimate $|\cN_1(\sC_{\sss (1)}^{a})|$. Thus, Proposition~\ref{prop-LB-size-span-C1} follows.
\end{proof}

\subsection{Barely supercritical regime with single-edge constraint}
\label{app-barely-supercritical-regime}
In this section, we prove Theorem \ref{thm:asymp-single} by investigating the barely supercritical regime, where the size of the unique largest connected component for $\mNR$ was identified in Theorem~\ref{thm:asymp-multi}. 
We rely on the obvious inequality
    \eqn{
    |\sC_{\sss (1)}(\percn)|\geq |\sC^*_{\sss (1)}(\percn)|.
    } 
By Proposition~\ref{prop-LB-size-span-C1}, with high probability and for every $\vep>0$,
    \eqn{
    |\sC_{\sss (1)}^*(\percn)|\geq (\zeta-\vep)n\percn^{1/(3-\tau)}.
    }
By Theorem~\ref{thm:asymp-multi}, on the other hand, again with high probability and for every $\vep>0$,
    \eqn{
    |\sC_{\sss (1)}(\percn)|\leq (\zeta+\vep)n\percn^{1/(3-\tau)}.
    }
This completes the proof of \eqref{super-critical-equivalent} that shows that for $\percn$, the single-edge constraint is insignificant.

\qed

\paragraph{Acknowledgements.}
SD was partially supported by Vannevar Bush Faculty Fellowship ONR-N00014-20-1-2826.
The work of RvdH is supported in part by the Netherlands Organisation for Scientific Research (NWO) through the Gravitation {\sc NETWORKS} grant no. 024.002.003.

\bibliographystyle{abbrv}
\bibliography{bibliography}

\end{document}